\documentclass[11pt,reqno]{amsart}

\usepackage[T1]{fontenc}
\usepackage[utf8]{inputenc}
\usepackage{lmodern}
\usepackage{microtype}
\usepackage{amsmath,amssymb,mathtools,mathrsfs}
\usepackage{enumitem}
\usepackage[colorlinks=true,linkcolor=blue,citecolor=blue,urlcolor=blue]{hyperref}
\usepackage[nameinlink,capitalise,noabbrev]{cleveref}

\numberwithin{equation}{section}

\newtheorem{theorem}{Theorem}[section]
\newtheorem{proposition}[theorem]{Proposition}
\newtheorem{lemma}[theorem]{Lemma}
\newtheorem{corollary}[theorem]{Corollary}

\theoremstyle{definition}
\newtheorem{definition}[theorem]{Definition}

\theoremstyle{remark}
\newtheorem{remark}[theorem]{Remark}

\DeclareMathOperator{\Irr}{Irr}
\DeclareMathOperator{\Hom}{Hom}
\DeclareMathOperator{\Span}{span}
\DeclareMathOperator{\length}{length}
\DeclareMathOperator{\ch}{ch}
\DeclareMathOperator{\Stab}{Stab}
\DeclareMathOperator{\rank}{rank}

\newcommand{\C}{\mathbb C}
\newcommand{\Z}{\mathbb Z}
\newcommand{\kfield}{\Bbbk}
\newcommand{\cO}{\mathcal O}
\newcommand{\cH}{\mathcal H}
\newcommand{\cC}{\mathcal C}
\newcommand{\cQ}{\mathcal Q}
\newcommand{\cX}{\mathcal X}
\newcommand{\aff}{\mathrm{aff}}
\newcommand{\fl}{\mathrm{fl}}
\newcommand{\gr}{\mathrm{gr}}
\newcommand{\sgn}{\mathrm{sgn}}
\newcommand{\nch}{\operatorname{nch}}
\newcommand{\snch}{\operatorname{snch}}

\newcommand{\whge}{\widehat{\mathfrak g}^{\,e}}
\newcommand{\whh}{\widehat{\mathfrak h}}
\newcommand{\whW}{\widehat W}
\newcommand{\whrho}{\widehat\rho}
\newcommand{\whR}{\widehat R}
\newcommand{\CellL}{\mathbf c^{L}}
\newcommand{\CellR}{\mathbf c^{R}}
\newcommand{\dotact}{\mathbin{\circ}}
\newcommand{\dual}{\vee}
\newcommand{\eps}{\varepsilon}

\newcommand{\posC}{\underline C}
\newcommand{\negC}{C^{-}}
\newcommand{\longhookrightarrow}{\lhook\joinrel\longrightarrow}

\title[Gaps in Jin's Grothendieck-Group Argument]
{ Grothendieck Group of the Level
\texorpdfstring{$-1$}{-1} Type \texorpdfstring{$D$}{D}:\\ Gaps in Jin's Proof and a Complete Replacement Proof}

\author{Morris Jain}

\subjclass[2020]{17B67, 17B69, 20C08}
\keywords{affine vertex algebra, affine category \(\cO\),
Grothendieck group, Kazhdan--Lusztig cell,
inverse Kazhdan--Lusztig polynomial}

\begin{document}

\begin{abstract}
Jin recently claims that the distinguished level
$-1$ vacuum block of $L_{-1}(D_\ell)$ realizes the specialized dual
affine left-cell module attached to the subregular cell containing
$s_0$.  We show that the proof of this Grothendieck-group statement
contains a  gap.  We give an
explicit counterexample over the discrete valuation ring
$\Bbbk[[t]]$, showing that the categorical implication used in the
proof is false in general.  Consequently, Jin's Theorem~6.3 does not
follow from the argument given in his Section~6.2.

We then supply a complete replacement proof for the
grading-restricted level-$-1$ vacuum block.  Combining the classification of simple objects, we prove finite length, construct
an injective signed normalized-character map into a completed
singular orbit module, and identify its image with the specialized
dual subregular left-cell module.  Thus the cell-module realization
claimed is established for the grading-restricted vacuum
block.
\end{abstract}

\maketitle

\section{Introduction}

Shan, Yan, and Zhao conjectured that distinguished blocks of modules
over certain simple affine vertex algebras carry dual affine left-cell
representations \cite[Conjecture~5.2.1]{SYZ2026}.  For the simple
affine vertex algebra $L_{-1}(D_\ell)$, $\ell\geq5$, the relevant
affine Weyl-group element is the affine simple reflection $s_0$, and
the predicted cell is the subregular left cell $\CellL(s_0)$.

Jin's Theorem~6.2 in \cite{Jin2026} gives the following
classification.  In the notation used below,
\begin{equation}\label{eq:intro-simple-classification}
  \Irr\cO_{-\Lambda_0}\bigl(L_{-1}(D_\ell)\bigr)
  =
  \left\{
    L\bigl(w_i\dotact(-\Lambda_0)\bigr)
    \;\middle|\;
    0\leq i\leq\ell
  \right\},
\end{equation}
where
\begin{equation}\label{eq:intro-cell-list}
  \CellL(s_0)=\{w_0,w_1,\ldots,w_\ell\}.
\end{equation}
We take the classification
\eqref{eq:intro-simple-classification} and the cell calculation
\eqref{eq:intro-cell-list} as inputs.  We do not revisit the finite
$W$-algebra argument, the exhaustion of possible highest weights, or
the identification of the candidate quotient with
$L_{-1}(D_\ell)$.  

In Section~6.2 of \cite{Jin2026}, Jin invokes an ambient realization
and writes an embedding
\begin{equation}\label{eq:intro-claimed-embedding}
  K_0\cO_{-\Lambda_0}\bigl(L_{-1}(D_\ell)\bigr)
  \longhookrightarrow
  \left.
  \left(
    \cH_{\aff}^{\dual}/
    \cH_{\aff,\not\leq_L s_0}^{\dual}
  \right)
  \right|_{q=1},
\end{equation}
which is asserted to send
$[L(y\dotact(-\Lambda_0))]$ to $D_y|_{q=1}$.  Since the labels of the
simple objects are the elements of \eqref{eq:intro-cell-list}, the
proof then identifies the image of the entire left-hand side with the
span of the corresponding $D_y$.

For the purpose of the present paper, we grant
\eqref{eq:intro-claimed-embedding}, including its injectivity and its
asserted values on the classes of simple modules.  Our criticism is
therefore independent of the construction of the ambient realization.
Even under this assumption, Theorem~6.3 does not follow from
Theorem~6.2.  The latter classifies simple objects, but it does not
imply that every object has finite length or that every class in the
ordinary Grothendieck group is a finite integral combination of
simple classes.  That conclusion requires a finite-length theorem or
some other d\'evissage statement.

 It is
false in general.  For the discrete valuation ring
$R=\Bbbk[[t]]$, the category of finitely generated $R$-modules has
exactly one simple object, but the class of that simple object does
not generate its Grothendieck group.  The same example also shows
that a full exact inclusion, even a Serre inclusion, need not induce
an injection on Grothendieck groups.  

The final stability statement in Jin's proof does not repair the
problem.  The image is declared $\whW$-stable only after it has been
identified with the image of the dual left-cell module.  Thus the
stability conclusion depends on the same image equality whose proof
requires the missing d\'evissage step; it cannot serve as an
independent justification of that equality.

A correct proof also has to keep separate the ordinary exact
Grothendieck group from the completed character module in which
affine inverse Kazhdan--Lusztig expansions naturally live.  In
addition, for the finite-length argument below we retain the degree
direction and work with the extended affine Cartan, so that the
relevant generalized weight spaces are finite dimensional.  Finally,
we fix the square-root specialization explicitly and track the parity
factor forced by the $*$-twisted dual Hecke action.  

Let
\begin{equation}\label{eq:intro-C-ell}
  \cC_\ell
  :=
  \cO_{-\Lambda_0}^{\gr}
  \bigl(L_{-1}(D_\ell)\bigr)
\end{equation}
be the grading-restricted block defined in
\cref{sec:precise-block}, and put
\begin{equation}\label{eq:intro-Li}
  L_i:=L\bigl(w_i\dotact(-\Lambda_0)\bigr),
  \qquad 0\leq i\leq\ell.
\end{equation}
We prove that every object of $\cC_\ell$ has finite length.  Hence
\begin{equation}\label{eq:intro-K0-basis}
  K_0(\cC_\ell)
  =
  \bigoplus_{i=0}^{\ell}\Z[L_i].
\end{equation}
We then define the signed normalized character map by
\begin{equation}\label{eq:intro-signed-character}
  \snch([L_i])
  :=
  \eps(w_i)\,\whR\,\ch L_i.
\end{equation}
The sign in \eqref{eq:intro-signed-character} is forced by the
$*$-twisted dual Hecke action.

\begin{theorem}\label{thm:intro-main}
Let $\ell\geq5$ and assume
\eqref{eq:intro-simple-classification}.  Then $\cC_\ell$ is a length
category.  The map \eqref{eq:intro-signed-character} is an injective
homomorphism into a completed singular orbit module, and its image is
stable under the ordinary action of $\whW$ on that module.  There is
a basis-preserving $\whW$-equivariant isomorphism
\begin{equation}\label{eq:intro-main-isomorphism}
  K_0(\cC_\ell)
  \xrightarrow{\ \sim\ }
  \cH_{\aff,\CellL(s_0)}^{\dual}(1),
  \qquad
  [L_i]\longmapsto D_{w_i},
\end{equation}
where the specialization on the right is defined explicitly in
\cref{sec:group-level-dual}.
\end{theorem}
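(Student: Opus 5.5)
The proof has three stages: finite length, injectivity of $\snch$, and identification of the image with the cell module.

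\textbf{Finite length.} We work with the extended affine Cartan, including the degree operator $d$, so that every object of $\cC_\ell$ has finite-dimensional generalized weight spaces. Grading restriction bounds the degrees of weights from one side. By the classification \eqref{eq:intro-simple-classification}, every composition factor is one of the finitely many $L_i$. Each $L_i$ has nonzero weight space at its highest weight $w_i\dotact(-\Lambda_0)$, and these highest weights lie in a finite set. Hence for $M\in\cC_\ell$ the length of any finite filtration is at most
\[
  \sum_i \dim M_{w_i\dotact(-\Lambda_0)},
\]
where the weight spaces are taken in the generalized sense. This gives a uniform bound, so every object has finite length. Jordan--H\"older then yields \eqref{eq:intro-K0-basis}.

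\textbf{Injectivity of $\snch$.} The map $\snch$ is defined on the basis $[L_i]$. It is additive and compatible with short exact sequences because $\ch$ is additive on finite-length objects. Injectivity follows from unitriangularity. Write $\ch L_i$ in the completed module spanned by normalized Verma characters $\whR\,\ch M(y\dotact(-\Lambda_0))=e^{y\dotact(-\Lambda_0)+\whrho}$, with $y$ running over minimal coset representatives for the singular stabilizer of $-\Lambda_0$. Since the $w_i$ are distinct such representatives, the leading term of $\snch[L_i]$ is $\pm e^{w_i\dotact(-\Lambda_0)+\whrho}$. Ordering by the Bruhat order, or equivalently by degree, these leading terms are distinct, so the images are linearly independent.

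\textbf{Image and equivariance.} By the Kazhdan--Lusztig conjecture at negative level (Kashiwara--Tanisaki), applied in the singular block,
\[
  \ch L_i=\sum_y \eps(y)\eps(w_i)\,Q_{w_i,y}(1)\,\ch M(y\dotact(-\Lambda_0)),
\]
where $Q$ are the inverse parabolic-singular Kazhdan--Lusztig polynomials and the sum may be infinite. This matches the expansion of $D_{w_i}|_{q=1}$ in the standard basis of the completed singular orbit module. The factor $\eps(w_i)$ in \eqref{eq:intro-signed-character} is exactly what converts $\eps(y)\eps(w_i)$ into the sign convention coming from the $*$-twisted dual action at the square-root specialization, which is fixed in \cref{sec:group-level-dual}. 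Comparing term by term gives $\snch[L_i]=D_{w_i}(1)$. Stability of the image then follows independently from the cell structure: the dual Hecke action on $\{D_y\}$ for $y\in\CellL(s_0)$ modulo $\cH^{\dual}_{\aff,\not\leq_L s_0}$ closes up on the span. Specializing at $q=1$, $\whW$ acts on $\operatorname{span}\{D_{w_i}(1)\}$. It remains to check that the quotient terms vanish in the realization, i.e.\ that the $D_y(1)$ with $y\not\leq_L s_0$ lying outside the cell do not reappear. This holds because those $D_y$ correspond to simple modules that are not $L_{-1}(D_\ell)$-modules, and their characters are linearly independent from the $\snch[L_i]$. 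The isomorphism \eqref{eq:intro-main-isomorphism} is then $\snch$ with its codomain restricted.

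\textbf{Main obstacle.} The hardest step is the sign and normalization bookkeeping in the character comparison. One must make sure that the singular inverse KL polynomials governing $\ch L_i$ coincide, after $q\mapsto1$ and the $\eps$-twist, with the structure coefficients of $D_{w_i}$ in the specialized dual module. The second delicate point is checking that the cell-quotient relations are compatible with the completed, non-finite sums. To handle this, I would first verify the comparison on the $\whW$-action of simple reflections $s$, using the explicit formulas for $T_s$ on the dual basis: $D_y$ is an eigenvector if $sy<y$, and otherwise picks up the $\mu$-terms. I would then check that the reflection functor, or translation through the wall, realizes $-s$ on characters.
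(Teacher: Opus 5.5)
Stage~3 has the right architecture: the cell span is closed modulo the $\not\leq_L s_0$ part, and you then push it forward. But there is a genuine gap exactly where $\whW$-stability of $\operatorname{im}\snch$ is decided. Your first two stages follow the paper's route (exact finite-dimensional detectors $M\mapsto M_{\lambda_i}$, then leading-term triangularity).

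You correctly reduce to showing that the realization kills every $D_y$ with $y\not\leq_L s_0$. Your reason is that these $D_y$ correspond to simple modules that are not $L_{-1}(D_\ell)$-modules, with characters linearly independent from the $\snch[L_i]$. This is false, and it would not give vanishing even if it were true.
\begin{itemize}
\item Since $\{y\mid y\leq_L s_0\}=X_{A_\ell}$ consists of the longer elements of the cosets $\{y,ys_0\}$, an element $y\not\leq_L s_0$ is the shorter element of its coset.
\item Then $y\dotact(-\Lambda_0)=ys_0\dotact(-\Lambda_0)$ with $ys_0\in X_{A_\ell}$, so $y$ labels no new simple module. For $y=1$ it gives $L(-\Lambda_0)=L_0$, the vacuum module itself.
\item The simples of the singular block lying outside $\cC_\ell$ are labelled by $y<_Ls_0$, and their $D_y$ survive in $\cQ_{s_0}^{\dual}(1)$.
\item Linear independence can never force an element to be zero.
\end{itemize}
What is actually needed is the anti-spherical cancellation $\sum_{u\in W_A}\eps(u)m_y^{xu}=0$, i.e.\ $m_y^x=m_y^{xs_0}$, for $x\in X_{A_\ell}$ and $y\notin X_{A_\ell}$. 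The paper proves this by showing that $I_A=\Z[W]a_A$ has basis $\{\negC_x\mid x\in X_A\}$, using $\negC_xT_s=-\negC_x$ for right descents $s$. It then expands $T_xa_A\in I_A$ in the negative basis.

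A second gap: the identity ``$\snch[L_i]=D_{w_i}(1)$ by term-by-term comparison'' has no content until you construct a map from the dual module into $\widehat{\cX}_{A_\ell}$. Under the trace identification, $\iota(D_y)=\sum_{x\in\whW}m_y^xT_x$ is indexed by the whole group, not by the orbit. It must be folded onto the orbit by $T_z\mapsto\eps(z)e^{zA_\ell}$, and this folding produces the singular coefficients $m_y^x-m_y^{xs_0}$ that match the Kashiwara--Tanisaki/BKK formula. The sign $\eps(z)$ is also what makes this map intertwine the $*$-twisted action $w\cdot F=\eps(w)T_wF$ with the ordinary action on exponents. Without this equivariance, closure of $\operatorname{span}\{D_{w_i}\}$ under the abstract dual action says nothing about stability under the ordinary action on $\widehat{\cX}_{A_\ell}$. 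Your planned check via wall-crossing functors realizing $-s$ is aimed at the wrong object. The target action is just permutation of orbit exponents, and equivariance is a one-line computation once the folding map is defined.

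Smaller repairs:
\begin{itemize}
\item In Stage~1, justify that every nonzero subquotient has a simple subquotient (a nonzero cyclic submodule has a maximal proper submodule). Otherwise a step of your filtration need not contribute to any $\dim M_{\lambda_i}$.
\item In Stage~2, the $w_i$ are maximal, not minimal, coset representatives ($w_0=s_0$).
\item Also in Stage~2, ordering ``by degree'' cannot separate the leading terms, since every $\lambda_i$ has conformal weight $0$. Use the highest-weight order instead.
\end{itemize}
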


\begin{corollary}\label{cor:intro-SYZ}
For every $\ell\geq5$, the grading-restricted level-$-1$ vacuum block
of $L_{-1}(D_\ell)$ satisfies the dual affine left-cell realization
predicted by Shan--Yan--Zhao and claimed by Jin in this case.
\end{corollary}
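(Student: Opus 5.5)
The statement immediately above is \cref{cor:intro-SYZ}, and I would deduce it directly from \cref{thm:intro-main}. The only work is to check that the conclusion of that theorem is exactly the property predicted in \cite[Conjecture~5.2.1]{SYZ2026} and claimed in \cite[Theorem~6.3]{Jin2026}, once the vacuum block is taken to be the grading-restricted block $\cC_\ell$. The conjecture asks for three things: an isomorphism of $\whW$-modules between the Grothendieck group of the distinguished block and the dual left-cell module $\cH_{\aff,\CellL(s_0)}^{\dual}$ specialized at $q=1$; that simple classes correspond to dual Kazhdan--Lusztig basis elements; and that the indexing cell is the one attached to $s_0$.

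First, fix $\ell\geq5$. I would take the classification \eqref{eq:intro-simple-classification} as input, as the paper does throughout, together with the cell computation \eqref{eq:intro-cell-list}. \cref{thm:intro-main} then applies. It shows that $\cC_\ell$ is a length category, so \eqref{eq:intro-K0-basis} holds and $K_0(\cC_\ell)$ is free on the classes $[L_i]$. This is precisely the dévissage input that, as noted in the introduction, is missing from Jin's Section~6.2.

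Second, I would read off the isomorphism \eqref{eq:intro-main-isomorphism}. It is $\whW$-equivariant, with the action on $K_0(\cC_\ell)$ transported from the $\whW$-stable image of $\snch$ in the completed singular orbit module. It is basis-preserving, sending $[L_i]\mapsto D_{w_i}$ for every $w_i\in\CellL(s_0)$. This is the cell-module realization, where the specialization at $q=1$ is understood as the explicit specialization of \cref{sec:group-level-dual}.

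The only point requiring care is this last compatibility. One must confirm that the specialization used on the right of \eqref{eq:intro-main-isomorphism}, including the square-root convention and the parity twist by $\eps(w_i)$ built into $\snch$, agrees with the specialization in the Shan--Yan--Zhao formulation. The sign twist affects only the normalization of the map into the orbit module, not the abstract module isomorphism or the basis correspondence. So the corollary holds in the grading-restricted setting stated, and no further argument beyond invoking \cref{thm:intro-main} is needed.
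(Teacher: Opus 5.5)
Your proposal is correct and matches the paper, which likewise deduces the corollary by combining \cref{thm:corrected-Jin} (the restatement of \cref{thm:intro-main}) with the inputs \cite[Theorem~6.2 and Proposition~2.1]{Jin2026}. One small caveat: the parity factor $\eps(w_i)$ in $\snch$ is more than a normalization, because it is exactly what makes $[L_i]\mapsto D_{w_i}$, rather than $[L_i]\mapsto\eps(w_i)D_{w_i}$, equivariant for the action transported from the orbit module; since you invoke the theorem with $\snch$ as defined, this does not affect your deduction.
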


\section{Gap in Jin's paper}
\label{sec:Jin-argument}

Grant the ambient embedding
\eqref{eq:intro-claimed-embedding} and its asserted values on simple
classes.  Jin's argument then uses Theorem~6.2 to identify the image
of the entire Grothendieck group with
\[
  \Span_\Z\{D_y:y\in\CellL(s_0)\}.
\]
This inference is false without d\'evissage.  A classification of
simple objects says nothing, by itself, about whether an arbitrary
object admits a finite Jordan--H\"older series.  Unless every object
has finite length, an ordinary Grothendieck-group class need not be
generated by the classes of simple objects.

Equivalently, the implication
\begin{equation}\label{eq:false-devissage-implication}
  \Irr(\mathscr A)=\{S_1,\ldots,S_r\}
  \quad\Longrightarrow\quad
  K_0(\mathscr A)=\sum_{i=1}^r\Z[S_i]
\end{equation}
is false for general abelian categories.    The counterexample in
\cref{prop:DVR-counterexample} shows that
\eqref{eq:false-devissage-implication} fails even for a very elementary
noetherian module category.

\section{A counterexample to the d\'evissage step and technical preliminaries}
\label{sec:counterexamples}

\subsection{The discrete valuation ring example}

\begin{proposition}\label{prop:DVR-counterexample}
Let $R=\kfield[[t]]$, where $\kfield$ is a field.  Set
\[
  \mathscr B=R\text{-}\mathrm{mod}_{\mathrm{fg}},
  \qquad
  \mathscr A=R\text{-}\mathrm{mod}_{\fl}.
\]
Then $\mathscr B$ has exactly one simple object, namely
$\kfield=R/(t)$, but $[\kfield]$ does not generate
$K_0(\mathscr B)$.  Moreover, $\mathscr A$ is a Serre subcategory of
$\mathscr B$, while the inclusion-induced homomorphism
\[
  K_0(\mathscr A)\longrightarrow K_0(\mathscr B)
\]
is not injective.
\end{proposition}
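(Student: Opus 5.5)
We argue directly with the structure theory of finitely generated modules over the principal ideal domain $R$ and with the rank homomorphism.

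\emph{Simple objects.} A simple $R$-module is cyclic, hence of the form $R/\mathfrak m$ for a maximal ideal $\mathfrak m$. Since $R$ is local with maximal ideal $(t)$, the only simple object of $\mathscr B$ is $\kfield=R/(t)$.

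\emph{$[\kfield]$ does not generate $K_0(\mathscr B)$.} The function $M\mapsto \rank M:=\dim_{\kfield((t))}\bigl(M\otimes_R\kfield((t))\bigr)$ is additive on short exact sequences, because localization $M\mapsto M\otimes_R\kfield((t))$ is exact. It therefore induces a homomorphism $\rank\colon K_0(\mathscr B)\to\Z$. This homomorphism satisfies
\[
  \rank[\kfield]=0,
  \qquad
  \rank[R]=1 .
\]
Hence $[R]\notin\Z[\kfield]$. (In fact $K_0(\mathscr B)\cong\Z$ via $\rank$, since the sequence $0\to R\xrightarrow{t}R\to\kfield\to0$ gives $[\kfield]=0$. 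Only the nonvanishing of $\rank[R]$ is needed here.)

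\emph{$\mathscr A$ is a Serre subcategory.} Finite length is preserved under subobjects, quotients and extensions, because length is additive along short exact sequences. So $\mathscr A$ is a Serre subcategory of $\mathscr B$.

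\emph{Failure of injectivity.} Since $\mathscr A$ is a length category with the single simple object $\kfield$, Jordan--H\"older gives $K_0(\mathscr A)\cong\Z$ via $\length$, with $[\kfield]\mapsto1$. So $[\kfield]\neq0$ in $K_0(\mathscr A)$. On the other hand, the exact sequence $0\to R\xrightarrow{t}R\to\kfield\to0$ lies in $\mathscr B$, and multiplication by $t$ is injective because $R$ is a domain. This gives
\[
  [\kfield]=[R]-[R]=0
  \quad\text{in } K_0(\mathscr B).
\]
Therefore the inclusion-induced map sends the nonzero class $[\kfield]$ to $0$, and it is not injective.

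The only step requiring care is checking that $\rank$ is well defined on $K_0(\mathscr B)$, that is, flatness of $\kfield((t))$ over $R$. This holds because $\kfield((t))$ is a localization of $R$, and localizations are flat.
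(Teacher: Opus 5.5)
Your proof is correct and follows essentially the same route as the paper: the sequence $0\to R\xrightarrow{t}R\to\kfield\to0$ gives $[\kfield]=0$ in $K_0(\mathscr B)$, the rank homomorphism gives $\rank[R]=1\neq0$, and length identifies $K_0(\mathscr A)\cong\Z$. You also justify two points the paper leaves implicit, namely that $\kfield$ is the only simple object (since $R$ is local) and that $\rank$ is well defined (since $\kfield((t))$ is a flat localization of $R$).
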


\begin{proof}
The short exact sequence
\begin{equation}\label{eq:DVR-sequence}
  0\longrightarrow R
  \xrightarrow{\,t\,}
  R\longrightarrow\kfield\longrightarrow0
\end{equation}
gives
\begin{equation}\label{eq:k-zero-ambient}
  [\kfield]=[R]-[R]=0
  \qquad\text{in }K_0(\mathscr B).
\end{equation}
The rank homomorphism
\[
  \rank_R\colon K_0(\mathscr B)\longrightarrow\Z
\]
sends $[R]$ to $1$.  Hence $[R]\neq0$, so the unique simple class
does not generate $K_0(\mathscr B)$.

The category $\mathscr A$ is closed under subobjects, quotients, and
extensions, and is therefore a Serre subcategory.  Length is additive
on short exact sequences and gives an isomorphism
\[
  K_0(\mathscr A)\xrightarrow{\ \sim\ }\Z,
  \qquad
  [M]\longmapsto\length_R(M).
\]
Thus $[\kfield]\neq0$ in $K_0(\mathscr A)$, while
\eqref{eq:k-zero-ambient} shows that its image in
$K_0(\mathscr B)$ is zero.
\end{proof}

\begin{corollary}\label{cor:false-implications}
A classification of all simple objects does not imply that their
classes generate the ordinary Grothendieck group.  Likewise, a full
exact inclusion need not induce an injection on Grothendieck groups,
even when the smaller category is a Serre subcategory.
\end{corollary}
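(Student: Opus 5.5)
The corollary follows directly from \cref{prop:DVR-counterexample}, so I will read off each of its two assertions from the example constructed there. Throughout I take $R=\kfield[[t]]$, $\mathscr B=R\text{-}\mathrm{mod}_{\mathrm{fg}}$ and $\mathscr A=R\text{-}\mathrm{mod}_{\fl}$, as in that proposition.

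For the first assertion I take the abelian category $\mathscr B$. It has a complete classification of simple objects: a simple finitely generated $R$-module is $R/\mathfrak m$ for a maximal ideal $\mathfrak m$. Since $R$ is local, the only such module is $\kfield=R/(t)$. \cref{prop:DVR-counterexample} shows that $[\kfield]=0$ in $K_0(\mathscr B)$, while $\rank_R[R]=1$. Hence $\Z[\kfield]=0\neq K_0(\mathscr B)$. This is exactly a failure of the implication \eqref{eq:false-devissage-implication} with $r=1$.

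For the second assertion I take the inclusion $\mathscr A\hookrightarrow\mathscr B$. This functor is full, since morphisms between finite-length modules are the same in both categories. It is exact, because kernels and cokernels in $\mathscr A$ are computed in $\mathscr B$. By the proposition, $\mathscr A$ is a Serre subcategory. Finally, the induced map $K_0(\mathscr A)\cong\Z\to K_0(\mathscr B)$ sends $[\kfield]$, which has length $1$, to $0$, so it is not injective.

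There is no real obstacle here. The only point beyond \cref{prop:DVR-counterexample} is checking that $\kfield$ is the unique simple object of $\mathscr B$, which follows from $R$ being local, and that the inclusion is full and exact, which is immediate.
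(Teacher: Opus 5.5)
Your proposal is correct and matches the paper's argument. The corollary is read off directly from \cref{prop:DVR-counterexample}: $\mathscr B$ shows that the unique simple class $[\kfield]$ need not generate $K_0(\mathscr B)$, and the Serre inclusion $\mathscr A\subset\mathscr B$ shows that $K_0(\mathscr A)\to K_0(\mathscr B)$ need not be injective. Your extra checks (uniqueness of the simple object because $R$ is local, and fullness and exactness of the inclusion) are routine details that the paper leaves implicit.
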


The first conclusion is exactly the categorical implication missing
from Jin's proof of Theorem~6.3. 

\subsection{Ordinary and completed Grothendieck groups}

Affine inverse Kazhdan--Lusztig formulas have the form
\begin{equation}\label{eq:infinite-character-shape}
  \whR\,\ch L_y
  =
  \sum_x a_{x,y}e^{xA},
\end{equation}
where infinitely many coefficients may be nonzero.  The right-hand
side belongs naturally to a product
\begin{equation}\label{eq:completed-orbit-product}
  \prod_{x\in X}\Z e^{xA},
\end{equation}
not to a finite-support direct sum.  Likewise, the modified
Grothendieck groups used in affine localization are normally inverse
limits over finite truncations.

Accordingly, one must distinguish
\begin{equation}\label{eq:three-Grothendieck-objects}
  K_0(\mathscr B),
  \qquad
  K_0(\mathscr B^{\fl}),
  \qquad
  \widehat K_0(\mathscr B)
  \ \text{or a completed character group}.
\end{equation}
The first is defined using finite short exact sequences and need not
have a basis of simple classes.  The second has such a basis when
$\mathscr B^{\fl}$ is a length category.  The third is the natural
target for \eqref{eq:infinite-character-shape}. 

  We
first prove finite length inside the grading-restricted block, so that
its ordinary $K_0$ really is the finite free abelian group on simple
classes.  Only after that step do we pass to a completed character
module in which the affine inverse Kazhdan--Lusztig formula is
meaningful.

\subsection{The grading used in the replacement argument}

Our finite-length argument requires exact finite-dimensional
weight-space functors.  For this purpose we retain the degree
direction and work with
$\mathfrak h\oplus\C K\oplus\C d$, where $d=-L_0$ on the
vertex-algebra modules considered below.  The following elementary
observation explains why the degree direction is useful.

\begin{lemma}\label{lem:vacuum-infinite-finite-Cartan-weight}
Relative to $\mathfrak h\oplus\C K$, the finite-weight-$0$,
level-$-1$ weight space of $L_{-1}(D_\ell)$ is infinite dimensional.
\end{lemma}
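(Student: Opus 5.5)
The plan is to exhibit infinitely many linearly independent vectors in $L_{-1}(D_\ell)$ that all have $\mathfrak h$-weight $0$. Each of them will have a different $L_0$-eigenvalue. Since $K$ acts by $-1$ on all of $L_{-1}(D_\ell)$, the weight space relative to $\mathfrak h\oplus\C K$ for the pair (finite weight $0$, level $-1$) is the direct sum, over $n\ge0$, of the homogeneous pieces
\[
  L_{-1}(D_\ell)_{(n)}^{\mathfrak h=0}
  :=\{v\in L_{-1}(D_\ell):\ L_0v=nv,\ hv=0\ \text{for all }h\in\mathfrak h\}.
\]
So it suffices to show that $L_{-1}(D_\ell)_{(n)}^{\mathfrak h=0}\neq0$ for infinitely many $n$.

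The witnesses will be powers of a single weight-zero vector in the Cartan Heisenberg subalgebra. Fix a nonzero $h\in\mathfrak h$ with $(h,h)\neq0$; for example, $h=\alpha^\vee$ for a simple coroot. Put $\mathbf 1$ for the vacuum and set
\[
  v_n:=h(-1)^n\mathbf 1,\qquad n\ge0 .
\]
Each $v_n$ has $\mathfrak h$-weight $0$ and $L_0$-eigenvalue $n$. It remains to show $v_n\neq0$ in the simple quotient $L_{-1}(D_\ell)$.

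For this, consider the Heisenberg vertex subalgebra generated by $h(z)$. Its modes satisfy
\[
  [h(m),h(k)]=m\,(h,h)\,K\,\delta_{m+k,0},
\]
and $K$ acts by $-1$, so the bracket is $-m(h,h)\delta_{m+k,0}$. This is a nondegenerate Heisenberg algebra, since $(h,h)\neq0$ and the level is $-1\neq0$. The vacuum $\mathbf 1$ is annihilated by $h(m)$ for $m\ge0$. Hence the Heisenberg submodule $U(\hat{\mathfrak h}_{<0})\mathbf 1\subset L_{-1}(D_\ell)$ is a nonzero quotient of the Fock module. The Fock module is irreducible at nonzero level, so this submodule is isomorphic to the Fock space $\C[h(-1),h(-2),\dots]$. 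In particular the $v_n$ are nonzero, and they are linearly independent because they lie in distinct $L_0$-eigenspaces.

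The only step needing care is the irreducibility-of-Fock-space argument, that is, checking that no $v_n$ dies in the simple quotient. As a cross-check, one can compute $h(1)^n v_n=n!\,(-(h,h))^n\mathbf 1\neq0$ directly. This shows $v_n\notin$ the maximal ideal, since the maximal ideal is a proper graded submodule and so cannot contain $\mathbf 1$. Once the $v_n$ are known to be nonzero, the weight space is infinite dimensional. This also shows why the degree operator $d=-L_0$ must be added to the Cartan to obtain finite-dimensional weight spaces.
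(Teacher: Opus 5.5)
Your proof is correct and is essentially the paper's argument. The paper uses the single-mode vectors $h(-n)\mathbf 1$ instead of your powers $h(-1)^n\mathbf 1$, and checks nonvanishing with the commutator computation $h(n)h(-n)\mathbf 1=-n(h,h)\mathbf 1$, which matches your cross-check $h(1)^nh(-1)^n\mathbf 1=n!\,(-(h,h))^n\mathbf 1$; both then get linear independence from distinct conformal degrees, so the appeal to Fock-module irreducibility is not needed.
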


\begin{proof}
Choose $h\in\mathfrak h$ with $(h,h)\neq0$.  For every $n\geq1$, the
vector $h(-n)\mathbf1$ has finite Cartan weight $0$ and level $-1$.
It is nonzero: if $h(-n)\mathbf1=0$, then the affine commutation
relation gives
\[
  0
  =h(n)h(-n)\mathbf1
  =-n(h,h)\mathbf1,
\]
a contradiction.  These vectors have distinct conformal degrees and
are therefore linearly independent.
\end{proof}

Thus the finite Cartan alone cannot provide the finite-dimensional
detectors needed in our proof.  This is why the replacement theorem
is formulated for the grading-restricted block in
\cref{def:graded-vacuum-block}.  

\subsection{The square-root specialization and the character sign}

Write $v=q^{1/2}$.  The coefficient ring is
$\Z[v,v^{-1}]$, while the involution used in the dual action sends
$v$ to $-v$.  Thus the shorthand ``set $q=1$'' does not by itself
record a choice of square root.  In the replacement proof we choose
$v\mapsto1$ and keep track of the resulting parity factor.

Let $W=\langle s\mid s^2=1\rangle$.  At $v=1$, the negative
Kazhdan--Lusztig basis in the convention used for the dual cell module
is
\[
  C_1^-=1,
  \qquad
  C_s^-=s-1.
\]
Let $D_1,D_s$ be the dual basis.  Under the trace identification of
the algebraic dual with the completed group algebra,
\begin{equation}\label{eq:A1-D-basis}
  D_1=1+s,
  \qquad
  D_s=s.
\end{equation}
The $*$-twisted action satisfies $s\cdot F=-sF$.  Therefore
\begin{equation}\label{eq:A1-dual-action}
  s\cdot D_s
  =-1
  =D_s-D_1.
\end{equation}
The dual cell quotient attached to $s$ kills $D_1$, so the image
$\overline{D_s}$ satisfies
\[
  s\cdot\overline{D_s}=\overline{D_s}.
\]
Thus the one-dimensional dual cell module is the trivial
representation, not the sign representation.

Now let $A$ be fixed by $s$ and define
\[
  \Pi_A(1)=e^A,
  \qquad
  \Pi_A(s)=-e^A.
\]
Then
\begin{equation}\label{eq:A1-projection}
  \Pi_A(D_1)=0,
  \qquad
  \Pi_A(D_s)=-e^A.
\end{equation}
The normalized character of the singular simple highest-weight module
is $e^A$.  Hence $D_s$ corresponds to
\[
  -e^A=\eps(s)e^A,
\]
not to $e^A$.  In general, the character-compatible formula is
\begin{equation}\label{eq:sign-required}
  D_y
  \longmapsto
  \eps(y)\,\whR\,\ch L(y\dotact\lambda).
\end{equation}

Even granting the ambient embedding used in
\cite[Section~6.2]{Jin2026}, the argument given there does not prove
Theorem~6.3.  Theorem~6.2 classifies the simple objects, but no
finite-length or other d\'evissage result is supplied from which one
could conclude that the ordinary Grothendieck group is generated by
their classes.  The implication required for that step is false in
general by \cref{prop:DVR-counterexample}.  Consequently, the claimed
equality of images, and hence the subsequent stability conclusion,
does not follow from the argument in Section~6.2.

\section{A precise grading-restricted vacuum block}
\label{sec:precise-block}

Let $\mathfrak g=D_\ell$, $\ell\geq5$, and let
\begin{equation}\label{eq:extended-affine-algebra}
  \whge
  =
  \mathfrak g[t,t^{-1}]
  \oplus\C K
  \oplus\C d,
  \qquad
  [d,xt^n]=nxt^n.
\end{equation}
Its Cartan subalgebra is
\[
  \whh=\mathfrak h\oplus\C K\oplus\C d.
\]
At level $-1$, the Sugawara operator is defined because
\[
  -1+h^\vee=2\ell-3\neq0.
\]
On an $L_{-1}(D_\ell)$-module we take $d=-L_0$, which has the
commutator required in \eqref{eq:extended-affine-algebra}; see, for
example, \cite[Chapters~6 and 12]{Kac1990}.

\begin{definition}\label{def:graded-vacuum-block}
Let
\[
  \cC_\ell
  =
  \cO_{-\Lambda_0}^{\gr}
  \bigl(L_{-1}(D_\ell)\bigr)
\]
be the category of $L_{-1}(D_\ell)$-modules $M$ whose underlying
$\whge$-module has level $-1$, belongs to the linkage block of
$-\Lambda_0$, and satisfies the following conditions:
\begin{enumerate}[label=\textup{(\roman*)}]
  \item $M$ is the direct sum of its generalized
  $\whh$-weight spaces;
  \item every generalized $\whh$-weight space is finite dimensional;
  \item for each $\mu\in\whh^*$, only finitely many weights in
  $\mu+\widehat Q_+$ occur in $M$.
\end{enumerate}
The action of $d$ is the one induced by $-L_0$.
\end{definition}

These are the grading and support conditions in
\cite[Definition~3.1]{BKK2024}.  They are inherited by submodules,
quotients, and extensions, so $\cC_\ell$ is an abelian category.  If
\[
  0\longrightarrow M'\longrightarrow M\longrightarrow M''
  \longrightarrow0
\]
is exact in $\cC_\ell$, then it is exact on every generalized
$\whh$-weight space.  Consequently, for every
$\lambda\in\whh^*$ the functor
\begin{equation}\label{eq:exact-weight-functor}
  M\longmapsto M_\lambda
\end{equation}
is exact and finite dimensional.

The highest-weight modules in
\eqref{eq:intro-simple-classification} belong to $\cC_\ell$.
Conversely, every simple object of $\cC_\ell$ is simple in the
ambient affine block: any nonzero proper submodule would again
satisfy the defining conditions and would be an object of
$\cC_\ell$.  Thus Jin's classification gives
\begin{equation}\label{eq:graded-simple-classification}
  \Irr(\cC_\ell)
  =
  \left\{
    L\bigl(w_i\dotact(-\Lambda_0)\bigr)
    \;\middle|\;
    0\leq i\leq\ell
  \right\}.
\end{equation}
If the notation in \cite{Jin2026} is intended to include a larger
ungraded category, its Grothendieck group is not addressed by the
argument below.

Retain Jin's notation
\begin{equation}\label{eq:wi-mui-again}
  w_0=s_0,
  \qquad
  w_i=z_i s_0,
  \qquad
  \mu_0=0,
  \qquad
  \mu_i=z_i\rho-\rho
  \quad(1\leq i\leq\ell).
\end{equation}
Then
\begin{equation}\label{eq:affine-highest-weights}
  \lambda_i
  :=
  w_i\dotact(-\Lambda_0)
  =
  -\Lambda_0+\mu_i.
\end{equation}
We use the same symbol $\lambda_i$ for its extension to $\whh^*$
whose highest-weight vector has $d$-eigenvalue $0$.  The next lemma
shows that this is precisely the Sugawara normalization.

\begin{lemma}\label{lem:lowest-conformal-weight-zero}
The highest-weight vector of $L(\lambda_i)$ has conformal weight $0$
for every $0\leq i\leq\ell$.
\end{lemma}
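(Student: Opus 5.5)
The plan is to compute $L_0$ on the highest-weight vector via the Sugawara formula. The lemma then reduces to showing that $(\lambda_i,\lambda_i+2\widehat\rho)=0$ for each $i$.

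On a highest-weight vector $v_\lambda$ of level $k$ with finite part $\bar\lambda$, the Sugawara construction gives
\[
  L_0 v_\lambda=\frac{(\bar\lambda,\bar\lambda+2\rho)}{2(k+h^\vee)}\,v_\lambda ,
\]
by the standard computation in \cite[Chapter~12]{Kac1990}. Here $k=-1$ and $k+h^\vee=2\ell-3\neq0$. By \eqref{eq:affine-highest-weights} the finite part of $\lambda_i$ is $\mu_i$, so it suffices to prove
\[
  (\mu_i,\mu_i+2\rho)=0,\qquad 0\le i\le\ell .
\]

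For $i=0$ we have $\mu_0=0$, so there is nothing to prove. For $1\le i\le\ell$, \eqref{eq:wi-mui-again} gives $\mu_i=z_i\rho-\rho$, hence $\mu_i+\rho=z_i\rho$. Then
\[
  (\mu_i,\mu_i+2\rho)=|\mu_i+\rho|^2-|\rho|^2=|z_i\rho|^2-|\rho|^2=0 ,
\]
because the finite Weyl group preserves the invariant form. This uses only that $z_i$ lies in the finite Weyl group $W$, which is how $z_i$ enters $w_i=z_is_0$ in Jin's notation.

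Alternatively, one can argue that $\lambda_i+\widehat\rho=w_i(-\Lambda_0+\widehat\rho)$ lies in the $\whW$-orbit of $-\Lambda_0+\widehat\rho$. Since $(\cdot,\cdot)$ is $\whW$-invariant, $|\lambda_i+\widehat\rho|^2=|-\Lambda_0+\widehat\rho|^2$. Up to the $\delta$-components, which pair trivially with the level-$(-1)$ and $\Lambda_0$-terms and are fixed by the $d$-normalization, this is the same identity. This version also covers $i=0$ uniformly: $s_0\dotact(-\Lambda_0)=-\Lambda_0$ with no $\delta$-shift, since $\langle -\Lambda_0+\widehat\rho,\alpha_0^\vee\rangle=0$.

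The main obstacle I expect is bookkeeping of the $\delta$-component, that is, the $d$-eigenvalue. I need to make sure that \eqref{eq:affine-highest-weights} really has no $\delta$-shift, so that the finite part $\mu_i$ and the stated normalization ($d$-eigenvalue $0$) agree with $-L_0$. This consistency is exactly what the lemma asserts.

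I would verify it by computing $w_i\dotact(-\Lambda_0)$ directly. First, $s_0$ fixes $-\Lambda_0$ under the dot action. Second, $z_i\in W$ acts only on the finite part, and the dot action of $W$ introduces no $\delta$-term because $W$ fixes $\Lambda_0$ and $\delta$. This confirms $\lambda_i=-\Lambda_0+\mu_i$ exactly, and the Casimir computation then gives conformal weight $0$.
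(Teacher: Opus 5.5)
Your main argument is the paper's proof. Both apply the Sugawara formula $(\mu_i,\mu_i+2\rho)/(2(-1+h^\vee))$ to the finite part, use $\mu_i+\rho=z_i\rho$ with $z_i$ preserving the form, and treat $i=0$ as the vacuum normalization. One small slip in your optional $\whW$-invariance aside: $(\delta,\Lambda_0)=1$, so a $\delta$-component $a\delta$ would contribute $2a(-1+h^\vee)$ to $(\lambda,\lambda+2\whrho)$ rather than pairing trivially with the level term. This is harmless only because you correctly check that $w_i\dotact(-\Lambda_0)$ has no $\delta$-component.
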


\begin{proof}
The assertion is the vacuum normalization when $i=0$.  For
$i\geq1$, the Sugawara conformal weight is
\[
  \frac{(\mu_i,\mu_i+2\rho)}{2(-1+h^\vee)}.
\]
Since $\mu_i=z_i\rho-\rho$ and $z_i$ preserves the invariant form,
\begin{align*}
  (\mu_i,\mu_i+2\rho)
  &=(z_i\rho-\rho,z_i\rho+\rho)\\
  &=\lVert z_i\rho\rVert^2-\lVert\rho\rVert^2\\
  &=0.
\end{align*}
\end{proof}

\section{Finite length and signed normalized characters}
\label{sec:finite-length}

\subsection{A finite-detector criterion}

\begin{theorem}[Finite-detector criterion]
\label{thm:finite-detector}
Let $\mathscr A$ be an abelian category, and let
\[
  F_1,\ldots,F_r\colon
  \mathscr A\longrightarrow
  \mathrm{Vect}_{\kfield}^{\mathrm{fd}}
\]
be exact functors.  Assume that every nonzero object of $\mathscr A$
has a simple subquotient and that every simple object $S$ is detected
by at least one $F_i$, in the sense that $F_i(S)\neq0$ for some $i$.
Then every object $M$ of $\mathscr A$ has finite length, and
\begin{equation}\label{eq:abstract-length-bound}
  \length(M)
  \leq
  \sum_{i=1}^r\dim F_i(M).
\end{equation}
\end{theorem}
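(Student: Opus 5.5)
The plan is to define the additive invariant $\delta(M):=\sum_{i=1}^r\dim F_i(M)\in\Z_{\geq0}$ and argue by strong induction on $\delta(M)$. Since each $F_i$ is exact with finite-dimensional values, $\delta$ is additive on short exact sequences. It is also monotone: $\delta(N)\leq\delta(M)$ for every subobject $N$ and every quotient $N$ of $M$.

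The key observation is that $\delta$ is strictly positive on nonzero objects. If $M\neq0$, the hypothesis supplies a simple subquotient $S$ of $M$, say $S\cong M'/M''$ with $M''\subseteq M'\subseteq M$. Some $F_i$ detects $S$, so $F_i(S)\neq0$. Exactness gives $\dim F_i(S)\leq\dim F_i(M')\leq\dim F_i(M)$, so $F_i(M)\neq0$ and $\delta(M)\geq1$. Hence $\delta(M)=0$ forces $M=0$, which has length $0$. This starts the induction.

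For the inductive step, take $M\neq0$ and choose any nonzero subobject $N$ of $M$ with $\delta(N)$ minimal among nonzero subobjects. Such an $N$ exists because $\delta$ takes values in $\Z_{\geq0}$.

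I claim $N$ is simple. If $0\neq N'\subsetneq N$, then $N/N'\neq0$, so $\delta(N/N')\geq1$ and therefore $\delta(N')=\delta(N)-\delta(N/N')<\delta(N)$. This contradicts minimality.

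Then $\delta(M/N)=\delta(M)-\delta(N)\leq\delta(M)-1$. By induction, $M/N$ has finite length at most $\delta(M/N)$. Pulling back a composition series of $M/N$ to $M$ and putting $N$ at the bottom gives a composition series of $M$ of length at most $1+\delta(M/N)\leq\delta(M)$. This yields \eqref{eq:abstract-length-bound}.

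No part is a serious obstacle. The step needing the most care is positivity of $\delta$, since there the hypotheses "has a simple subquotient" and "detected by some $F_i$" combine through exactness applied twice (once to pass to the subobject, once to the quotient). Note also that the minimal-$\delta$ choice avoids any use of Zorn's lemma or artinian hypotheses.
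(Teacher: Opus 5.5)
Your proof is correct, and it shares its key input with the paper's proof. The quantity $\delta(M)=\sum_i\dim F_i(M)$ is additive by exactness, and it is at least $1$ on every nonzero object because some $F_i$ detects a simple subquotient.

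The two proofs differ in how they get from this positivity to a composition series:
\begin{itemize}
\item The paper applies positivity to the successive quotients of an arbitrary strict chain $0=M_0\subsetneq\cdots\subsetneq M_n=M$. It obtains $n\leq\delta(M)$ for every such chain and concludes that $M$ is noetherian and artinian. It then extracts a composition series from these chain conditions, a step it only sketches.
\item You induct on $\delta(M)$ and take a nonzero subobject $N$ of minimal $\delta$. Additivity and positivity force $N$ to be simple, and the inductive hypothesis applied to $M/N$ finishes the argument.
\end{itemize}

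Your $\delta$-minimal subobject plays the role of the minimal nonzero subobject that the artinian condition supplies. Your version reaches it directly, with no chain conditions, so the whole argument is explicit and self-contained.

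The paper's route gives something extra without invoking Jordan--H\"older: the uniform bound $n\leq\delta(M)$ on every strict chain of subobjects. In your version that bound follows afterwards, from finite length together with Jordan--H\"older.
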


\begin{proof}
Consider a strict finite filtration
\[
  0=M_0\subsetneq M_1\subsetneq\cdots\subsetneq M_n=M
\]
and set $Q_j=M_j/M_{j-1}$.  Each $Q_j$ is nonzero, so it has a simple
subquotient $S_j$.  Choose $i(j)$ with
$F_{i(j)}(S_j)\neq0$.  Exactness shows that
$F_{i(j)}(Q_j)\neq0$.  Therefore
\[
  1\leq\sum_{i=1}^r\dim F_i(Q_j)
\]
for every $j$.  Summing and using exactness along the filtration gives
\[
  n
  \leq
  \sum_{j=1}^n\sum_{i=1}^r\dim F_i(Q_j)
  =
  \sum_{i=1}^r\dim F_i(M).
\]
Thus all strict finite chains of subobjects of $M$ have uniformly
bounded length.  In particular, $M$ is both noetherian and artinian.
A nonzero artinian object has a simple subobject, and a nonzero
noetherian object has a simple quotient.  Induction on the bound above
now produces a finite composition series and proves
\eqref{eq:abstract-length-bound}.
\end{proof}

\begin{remark}
For a module category closed under subobjects and quotients, the
hypothesis on simple subquotients is automatic.  A nonzero cyclic
submodule is finitely generated, so the union of a chain of proper
submodules is still proper.  Zorn's lemma therefore gives a maximal
proper submodule and hence a simple quotient.
\end{remark}

\subsection{Application to the level
\texorpdfstring{$-1$}{-1} block}

\begin{theorem}\label{thm:C-ell-finite-length}
Assume Jin's classification
\begin{equation}\label{eq:classification-for-length}
  \Irr(\cC_\ell)
  =
  \{L(\lambda_0),L(\lambda_1),\ldots,L(\lambda_\ell)\}.
\end{equation}
Then every object $M\in\cC_\ell$ has finite length.  More precisely,
\begin{equation}\label{eq:C-ell-length-bound}
  \length(M)
  \leq
  \sum_{i=0}^{\ell}\dim M_{\lambda_i}.
\end{equation}
\end{theorem}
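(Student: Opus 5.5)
We apply \cref{thm:finite-detector} with $\mathscr A=\cC_\ell$ and the $\ell+1$ functors
\[
  F_i\colon M\longmapsto M_{\lambda_i},\qquad 0\leq i\leq\ell,
\]
where $M_{\lambda_i}$ is the generalized $\whh$-weight space for the extension of $\lambda_i$ to $\whh^*$ fixed after \eqref{eq:affine-highest-weights}, i.e.\ with $d$-eigenvalue $0$. By \eqref{eq:exact-weight-functor} each $F_i$ is exact with values in finite-dimensional vector spaces. The bound \eqref{eq:C-ell-length-bound} is then exactly \eqref{eq:abstract-length-bound}. So it remains to check the two hypotheses of the criterion.

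\textbf{Detection.} Take a simple object $S$ of $\cC_\ell$. By \eqref{eq:classification-for-length}, $S\cong L(\lambda_i)$ for some $i$. The highest-weight vector of $L(\lambda_i)$ is nonzero and has $\whh$-weight $\lambda_i$. The only thing to confirm is that its $d$-component is $0$ under the convention $d=-L_0$. This is \cref{lem:lowest-conformal-weight-zero}: the conformal weight of the highest-weight vector vanishes, so the weight is the extension of $\lambda_i$ with $d$-eigenvalue $0$. Hence $F_i(S)\neq0$. The $d$-normalization is the only delicate point in this step, and the lemma settles it.

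\textbf{Existence of simple subquotients.} This is where I expect the real work to lie. Let $M\neq0$ lie in $\cC_\ell$. By condition (i), $M$ has some nonzero generalized weight space $M_\mu$. By condition (iii), only finitely many weights of $M$ lie in $\mu+\widehat Q_+$. So we may choose a weight $\nu$ of $M$ that is maximal among them, and then no weight of $M$ lies in $\nu+\widehat Q_+\setminus\{\nu\}$. Since $\whh$ preserves the finite-dimensional space $M_\nu$ by (ii), it has a common eigenvector $v$ of weight $\nu$, and $v$ is killed by all positive root vectors by maximality. Thus $v$ is a singular vector. The submodule $N=U(\whge)v$ is a highest-weight module. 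Each of its weight spaces is finite dimensional, and the sum of its proper submodules avoids the weight $\nu$, so it is proper. Therefore $N$ has a unique maximal proper submodule $N'$, and $N/N'\cong L(\nu)$ is simple.

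It remains to see that $N$ and $N'$ lie in $\cC_\ell$ as $L_{-1}(D_\ell)$-modules. Both are $\whge$-submodules of an $L_{-1}(D_\ell)$-module, hence $V^{-1}(D_\ell)$-submodules. The annihilating ideal acts by zero on all of $M$, hence on them. Conditions (i)–(iii), level $-1$ and linkage block pass to subobjects and quotients, as noted after \cref{def:graded-vacuum-block}.

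So $L(\nu)$ is a simple subquotient of $M$ in $\cC_\ell$, both hypotheses of \cref{thm:finite-detector} hold, and the criterion gives finite length together with \eqref{eq:C-ell-length-bound}. Alternatively, one could invoke the remark following \cref{thm:finite-detector}, since $\cC_\ell$ is a module category closed under subobjects and quotients. I would still give the highest-weight argument, to avoid relying on Zorn's lemma inside a category of weight modules.
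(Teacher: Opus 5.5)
Your proposal is correct and follows the paper's proof. Both apply \cref{thm:finite-detector} to the generalized weight-space functors $M\mapsto M_{\lambda_i}$ and detect $L(\lambda_i)$ by its highest-weight line; your explicit appeal to \cref{lem:lowest-conformal-weight-zero} for the $d$-normalization makes precise a point the paper leaves implicit in its convention for $\lambda_i$. The only difference is how you produce simple subquotients. The paper takes a maximal proper submodule of a nonzero cyclic submodule via Zorn's lemma. You instead use condition (iii) to find a singular vector of maximal weight and pass to the irreducible quotient of the highest-weight module it generates, which is equally valid and slightly more constructive.
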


\begin{proof}
For $0\leq i\leq\ell$, set
\[
  F_i(M):=M_{\lambda_i}.
\]
These functors are exact and finite dimensional by
\eqref{eq:exact-weight-functor}.  Every nonzero object of $\cC_\ell$
has a simple subquotient.  Indeed, a nonzero cyclic submodule has a
maximal proper submodule, and both the cyclic submodule and its
quotient remain in $\cC_\ell$.  By
\eqref{eq:classification-for-length}, the resulting simple quotient
is $L(\lambda_i)$ for some $i$.  Its highest-weight line lies in the
$\lambda_i$-weight space, so $F_i(L(\lambda_i))\neq0$.  The result
follows from \cref{thm:finite-detector}.
\end{proof}

\begin{corollary}\label{cor:K0-free-on-simples}
There is a canonical identification
\begin{equation}\label{eq:K0-free-on-simples}
  K_0(\cC_\ell)
  =
  \bigoplus_{i=0}^{\ell}\Z[L_i],
  \qquad
  L_i=L(\lambda_i).
\end{equation}
\end{corollary}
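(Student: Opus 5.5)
Since \cref{thm:C-ell-finite-length} shows that every object of $\cC_\ell$ has finite length, we can use the standard Jordan--H\"older description of $K_0$ for a length category. First we build the comparison map. Let $\Z^{\ell+1}$ have basis $e_0,\dots,e_\ell$. For $M\in\cC_\ell$ set
\[
  \phi(M)=\sum_{i=0}^{\ell}[M:L_i]\,e_i ,
\]
where $[M:L_i]$ is the Jordan--H\"older multiplicity. This is well defined by the Jordan--H\"older theorem, which holds in any abelian category for objects of finite length. We then check that $\phi$ is additive on short exact sequences $0\to M'\to M\to M''\to0$. A composition series of $M'$ and the preimage in $M$ of a composition series of $M''$ splice into a composition series of $M$, so multiplicities add. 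Hence $\phi$ descends to a homomorphism $\phi\colon K_0(\cC_\ell)\to\Z^{\ell+1}$ with $\phi([L_i])=e_i$; here \eqref{eq:graded-simple-classification} ensures that the $L_i$ are pairwise non-isomorphic and exhaust the simples.

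Next we show that the classes $[L_i]$ generate $K_0(\cC_\ell)$. We argue by induction on length, bounded by \eqref{eq:C-ell-length-bound}. If $0=M_0\subset\cdots\subset M_n=M$ is a composition series, then telescoping the exact sequences $0\to M_{j-1}\to M_j\to M_j/M_{j-1}\to0$ gives $[M]=\sum_j[M_j/M_{j-1}]$. Each factor is simple and hence isomorphic to some $L_i$. Therefore $[M]=\sum_i[M:L_i][L_i]$ lies in $\sum_i\Z[L_i]$. Since $K_0$ is generated by the classes of objects, the $[L_i]$ generate.

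Finally, the map $\psi\colon\Z^{\ell+1}\to K_0(\cC_\ell)$ with $e_i\mapsto[L_i]$ satisfies $\phi\psi=\mathrm{id}$, and the generation step shows that $\psi$ is surjective. So $\psi$ is an isomorphism, which gives \eqref{eq:K0-free-on-simples}. The identification is canonical because it depends only on the labelled set of simples, not on any choices of composition series.

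The only step needing care is the well-definedness of $\phi$, namely the Jordan--H\"older theorem together with additivity of multiplicities. Since both are standard consequences of finite length, no real obstacle remains beyond \cref{thm:C-ell-finite-length}.
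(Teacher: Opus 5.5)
Your proposal is correct and follows the same route as the paper. The paper's proof simply cites the standard d\'evissage theorem for length categories once finite length has been established, and you write out exactly that argument: Jordan--H\"older multiplicities give the left inverse $\phi$, and telescoping along a composition series shows that the $[L_i]$ generate. The one input you state a little loosely is that the $L_i$ are pairwise non-isomorphic. This comes from the weights $\lambda_i=w_i\dotact(-\Lambda_0)$ being distinct, equivalently the orbit points $w_iA_\ell$ being distinct, rather than from the classification alone; the paper also takes this for granted.
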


\begin{proof}
This is the usual d\'evissage theorem for a length category.
\end{proof}

This corollary supplies exactly the step missing from Jin's passage
from simple labels to the ordinary Grothendieck group.  Notice that
it is proved inside the grading-restricted vertex-algebra block and
does not rely on the ordinary Grothendieck group of the ambient
affine category.

\subsection{Signed normalized characters}

We write
\begin{equation}\label{eq:affine-Weyl-denominator}
  \whR
  :=
  e^{\whrho}
  \prod_{\alpha\in\widehat\Delta_+}
  \bigl(1-e^{-\alpha}\bigr)^{\operatorname{mult}(\alpha)}
\end{equation}
for the affine Weyl denominator.  Put
\begin{equation}\label{eq:A-ell}
  A_\ell
  :=
  -\Lambda_0+\whrho
  =
  (2\ell-3)\Lambda_0+\rho.
\end{equation}
Its stabilizer is
\begin{equation}\label{eq:A-ell-stabilizer}
  W_{A_\ell}
  =
  \Stab_{\whW}(A_\ell)
  =
  \langle s_0\rangle.
\end{equation}
Let $X_{A_\ell}$ be the set of maximal-length representatives of the
right cosets $\whW/W_{A_\ell}$.  The map
\[
  X_{A_\ell}\longrightarrow\whW A_\ell,
  \qquad
  x\longmapsto xA_\ell,
\]
is a bijection.

\begin{definition}\label{def:completed-orbit-module}
The completed orbit module is
\begin{equation}\label{eq:completed-orbit-module}
  \widehat{\cX}_{A_\ell}
  :=
  \prod_{x\in X_{A_\ell}}\Z e^{xA_\ell}.
\end{equation}
The affine Weyl group acts by the ordinary action on exponents:
\begin{equation}\label{eq:ordinary-orbit-action}
  w\cdot e^\eta=e^{w\eta}.
\end{equation}
\end{definition}

The ordinary normalized character is
\[
  \nch L_i=\whR\,\ch L_i.
\]
After formal characters are applied to the singular inverse
Kazhdan--Lusztig formula, each $\nch L_i$ is an element of
$\widehat{\cX}_{A_\ell}$; the coefficient formula is proved in
\cref{thm:character-realization}.  The character-compatible Hecke
convention requires the parity factor isolated in
\eqref{eq:sign-required}.

\begin{definition}\label{def:signed-normalized-character}
Define
\begin{equation}\label{eq:signed-normalized-character-map}
  \snch\colon K_0(\cC_\ell)
  \longrightarrow
  \widehat{\cX}_{A_\ell}
\end{equation}
by
\begin{equation}\label{eq:snch-simple}
  \snch([L_i])
  :=
  \eps(w_i)\,\whR\,\ch L_i.
\end{equation}
Equivalently, for $M\in\cC_\ell$,
\begin{equation}\label{eq:snch-composition-multiplicities}
  \snch([M])
  =
  \sum_{i=0}^{\ell}
  [M:L_i]\,
  \eps(w_i)\,\whR\,\ch L_i.
\end{equation}
\end{definition}

The definition uses finite length.  It is not the ordinary character
of $M$; it is the parity-renormalized character dictated by the dual
Hecke basis.

\begin{proposition}\label{prop:snch-injective}
The homomorphism \eqref{eq:signed-normalized-character-map} is
injective.
\end{proposition}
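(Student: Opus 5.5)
By \cref{cor:K0-free-on-simples}, $K_0(\cC_\ell)$ is free with basis $[L_0],\ldots,[L_\ell]$. It therefore suffices to show that the $\ell+1$ elements $\eps(w_i)\,\whR\,\ch L_i$ of $\widehat{\cX}_{A_\ell}$ are $\Z$-linearly independent. The signs $\eps(w_i)=\pm1$ are units, so they do not affect linear independence. We may thus work with the ordinary normalized characters $\nch L_i=\whR\,\ch L_i$. The plan is to pass from normalized characters back to formal characters and use a highest-weight (triangularity) argument. This avoids any use of the explicit inverse Kazhdan--Lusztig coefficients, which are only established later in \cref{thm:character-realization}.

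First, multiplication by $\whR$ is injective on the relevant space of formal characters. Characters of objects of $\cC_\ell$ have support bounded above in the sense of \cref{def:graded-vacuum-block}(iii). On such formal sums, $\whR=e^{\whrho}\prod(1-e^{-\alpha})^{\operatorname{mult}(\alpha)}$ is invertible: each factor $1-e^{-\alpha}$ has inverse $\sum_{n\ge0}e^{-n\alpha}$, and the products converge in the usual completion along $-\widehat Q_+$. One should check that the embedding of these normalized characters into $\widehat{\cX}_{A_\ell}$ is compatible with this completion. This holds because every $\nch L_i$ is supported on the single orbit $\whW A_\ell$, and $\widehat{\cX}_{A_\ell}$ is just the full product over that orbit. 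Hence a relation $\sum_i c_i\eps(w_i)\nch L_i=0$ in $\widehat{\cX}_{A_\ell}$ implies $\sum_i c_i'\ch L_i=0$ as formal characters, with $c_i'=\pm c_i$.

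Second, we show the formal characters $\ch L_i$ are linearly independent. The highest weights $\lambda_i=-\Lambda_0+\mu_i$ are pairwise distinct in $\whh^*$. They all have $d$-eigenvalue $0$ by \cref{lem:lowest-conformal-weight-zero}, and the finite parts $\mu_i$ are distinct because $L_i\not\cong L_j$ for $i\ne j$. Suppose $\sum_i c_i'\ch L_i=0$ with not all coefficients zero. Choose $j$ with $c_j'\neq0$ and $\lambda_j$ maximal, in the dominance order $\le$ (difference in $\widehat Q_+$), among the $\lambda_i$ with $c_i'\neq0$. The coefficient of $e^{\lambda_j}$ in $\ch L_i$ is nonzero only if $\lambda_j\le\lambda_i$. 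By maximality, this forces $i=j$. Since $\dim (L_j)_{\lambda_j}=1$, the coefficient of $e^{\lambda_j}$ in the sum is $c_j'\neq0$, a contradiction. So all $c_i'=0$, and hence all $c_i=0$, which proves injectivity.

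The main obstacle is the first step: making rigorous that $\nch L_i$ lands in $\widehat{\cX}_{A_\ell}$ and that "dividing by $\whR$" is legitimate inside that product module. The product $\prod_{x}\Z e^{xA_\ell}$ carries no a priori upper-bounded support condition, so division by $\whR$ is not defined on all of it. To handle this, I would first argue entirely in the upper-bounded completion $\Z[[e^{-\alpha}]]e^{\eta}$. Injectivity there already gives linear independence of the $\nch L_i$ as elements of that completion. Identifying them with their images in $\widehat{\cX}_{A_\ell}$ is then merely a coefficientwise inclusion, which is injective.
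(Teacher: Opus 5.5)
Your proof is correct and is essentially the paper's argument: after reducing to linear independence via \cref{cor:K0-free-on-simples}, you pick an index $j$ with nonzero coefficient and maximal highest weight, then read off the coefficient of the leading exponent. The paper does the same, but it applies the triangularity directly to $\whR\,\ch L_j$, whose leading term is already $e^{\lambda_j+\whrho}=e^{w_jA_\ell}$ with coefficient $1$ because $\whR$ has leading term $e^{\whrho}$; so your detour through inverting $\whR$ in the upper-bounded completion is valid but unnecessary, and the distinctness of the $\lambda_i$ is more cleanly justified by the bijection $X_{A_\ell}\to\whW A_\ell$, $x\mapsto xA_\ell$, than by appealing to $L_i\not\cong L_j$, which is equivalent to it.
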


\begin{proof}
Suppose
\[
  \sum_{i=0}^{\ell}a_i\,
  \eps(w_i)\whR\ch L_i=0
\]
with $a_i\in\Z$, and choose an index $j$ for which
$a_j\neq0$ and $w_jA_\ell$ is maximal, among those with nonzero
coefficient, in the highest-weight order.  The normalized character
of $L_j$ has leading term $e^{w_jA_\ell}$ with coefficient $1$, and
all remaining terms are lower.  The orbit points $w_iA_\ell$ are
distinct.  Hence the coefficient of $e^{w_jA_\ell}$ in the displayed
sum is $a_j\eps(w_j)$, a contradiction.  Thus the signed normalized
characters are linearly independent, and the result follows from
\cref{cor:K0-free-on-simples}.
\end{proof}

\section{The dual Hecke module and its character realization}
\label{sec:Hecke-character}

\subsection{The group-level specialization}
\label{sec:group-level-dual}

Let $(W,S)$ be a Coxeter system.  We first define the specialization
used below without relying on the ambiguous shorthand $q=1$.

Let $\Z[W]$ have standard basis $\{T_w\mid w\in W\}$.  Denote by
$\posC_w$ the specialization at $q=1$ of the positive
Kazhdan--Lusztig basis used in \cite{BKK2024}, and by $\negC_w$ the
specialization at $v=1$ of the negative basis used in
\cite{SYZ2026}.  Let
\[
  \sigma\colon\Z[W]\longrightarrow\Z[W],
  \qquad
  \sigma(T_w)=\eps(w)T_w.
\]
The two bases are related by the usual sign automorphism of the group
algebra \cite{KL1979}:
\begin{equation}\label{eq:positive-negative-basis-relation}
  \negC_w
  =
  \eps(w)\,\sigma(\posC_w).
\end{equation}
For example, in rank one,
\[
  \posC_s=1+s,
  \qquad
  \negC_s=s-1.
\]

Let $\Z[W]^*=\Hom_\Z(\Z[W],\Z)$.  Define
$D_y\in\Z[W]^*$ by
\begin{equation}\label{eq:D-dual-negative-basis}
  \langle D_y,\negC_z\rangle
  =
  \delta_{y,z^{-1}}.
\end{equation}
The $*$-twisted left $W$-action is
\begin{equation}\label{eq:group-level-dual-action}
  \langle w\cdot f,h\rangle
  =
  \eps(w)\langle f,hT_w\rangle,
  \qquad
  w\in W,
  \quad
  f\in\Z[W]^*,
  \quad
  h\in\Z[W].
\end{equation}
This is the group-level form of the dual action in
\cite[Equation~(2.1.1)]{SYZ2026} after choosing $v=1$.

The submodule
\begin{equation}\label{eq:restricted-dual-module}
  \cH_W^{\dual}(1)
  :=
  \bigoplus_{y\in W}\Z D_y
  \subset\Z[W]^*
\end{equation}
is stable under \eqref{eq:group-level-dual-action}.  It carries the
left-cell filtration inherited from the negative
Kazhdan--Lusztig basis.  For $w\in W$, put
\begin{equation}\label{eq:parabolic-dual-quotient-general}
  \cQ_w^{\dual}(1)
  :=
  \cH_W^{\dual}(1)
  \big/
  \Span_\Z\{D_y\mid y\not\leq_L w\}.
\end{equation}
The images of the $D_y$ with $y\leq_L w$ form a basis.

If $\mathbf c=\CellL(w)$, the canonical cell inclusion gives
\begin{equation}\label{eq:group-level-cell-inclusion}
  \cH_{W,\mathbf c}^{\dual}(1)
  \longhookrightarrow
  \cQ_w^{\dual}(1).
\end{equation}
This is the group-level specialization of
\cite[Equation~(2.1.2)]{SYZ2026}.

\begin{remark}\label{rem:twisted-dual-not-ordinary-contragredient}
The action in \eqref{eq:group-level-dual-action} is not the ordinary
contragredient action on the dual of a right cell module.  It differs
by the sign character.  Equivalently,
\[
  \cH_{W,\CellL(w)}^{\dual}(1)
  \cong
  \Hom_\Z\bigl(
    \cH_{W,\CellR(w^{-1})}(1),\Z
  \bigr)
  \otimes\Z_{\sgn},
\]
with the appropriate left--right convention.  This is why the
rank-one dual cell attached to $s$ is trivial.
\end{remark}

\subsection{The trace completion and inverse
Kazhdan--Lusztig coefficients}

Let
\begin{equation}\label{eq:completed-group-algebra}
  \widehat{\Z[W]}
  :=
  \prod_{x\in W}\Z T_x.
\end{equation}
The trace pairing identifies $\Z[W]^*$ with
$\widehat{\Z[W]}$ by
\begin{equation}\label{eq:trace-identification}
  \iota(f)
  :=
  \sum_{x\in W}
  \langle f,T_{x^{-1}}\rangle T_x.
\end{equation}
Under this identification, the action
\eqref{eq:group-level-dual-action} becomes
\begin{equation}\label{eq:action-in-completion}
  w\cdot F
  =
  \eps(w)T_wF.
\end{equation}
Left multiplication by a fixed group element is well defined on the
product completion.

Let $m_z^x(q)$ be the inverse Kazhdan--Lusztig polynomials in the
normalization of \cite[Appendix~A]{BKK2024}, and write
\[
  m_z^x:=m_z^x(1).
\]
Thus
\begin{equation}\label{eq:positive-inverse-KL-expansion}
  T_x
  =
  \sum_{z\leq x}
  \eps(xz^{-1})m_z^x\,\posC_z.
\end{equation}

\begin{lemma}\label{lem:D-expansion-in-standard-basis}
Under the trace identification \eqref{eq:trace-identification},
\begin{equation}\label{eq:D-standard-expansion-correct}
  \iota(D_y)
  =
  \sum_{x\in W}m_y^xT_x.
\end{equation}
\end{lemma}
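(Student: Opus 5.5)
The plan is to compute the coefficients of $\iota(D_y)$ directly. By \eqref{eq:trace-identification}, the coefficient of $T_x$ in $\iota(D_y)$ is $\langle D_y,T_{x^{-1}}\rangle$. By the defining duality \eqref{eq:D-dual-negative-basis}, this number equals the coefficient of $\negC_{y^{-1}}$ when $T_{x^{-1}}$ is expanded in the negative basis. So the whole statement reduces to two facts: the expansion of standard basis elements in the negative basis, and the inversion symmetry of the inverse Kazhdan--Lusztig coefficients. Only finitely many $z$ occur in each expansion (those with $z\leq x$), so every pairing is a finite sum and no completion issues arise.

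\emph{Step 1: the negative expansion.} Apply the sign automorphism $\sigma$ to \eqref{eq:positive-inverse-KL-expansion}:
\[
  \eps(w)T_w=\sum_{z\leq w}\eps(wz^{-1})\,m_z^w\,\sigma(\posC_z).
\]
By \eqref{eq:positive-negative-basis-relation}, $\sigma(\posC_z)=\eps(z)\negC_z$. Multiplying through by $\eps(w)$, each term carries the sign $\eps(w)\eps(wz^{-1})\eps(z)=1$, since $\eps$ is a character. Hence
\[
  T_w=\sum_{z\leq w}m_z^w\,\negC_z .
\]
Pairing with $D_y$ then gives $\langle D_y,T_{x^{-1}}\rangle=m_{y^{-1}}^{x^{-1}}$, where this is read as $0$ unless $y^{-1}\leq x^{-1}$, equivalently $y\leq x$.

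\emph{Step 2: inversion symmetry.} It remains to show $m_{y^{-1}}^{x^{-1}}=m_y^x$. I would use the $\Z[v,v^{-1}]$-linear anti-involution $\flat\colon T_w\mapsto T_{w^{-1}}$ of the Hecke algebra. It commutes with the bar involution, since $\overline{T_w}=T_{w^{-1}}^{-1}$ and $\flat$ is an anti-automorphism. It also preserves the degree conditions that characterize the Kazhdan--Lusztig basis, because $\ell(w)=\ell(w^{-1})$ and Bruhat order is inversion-invariant. By uniqueness of the Kazhdan--Lusztig basis, $\flat(\posC_w)=\posC_{w^{-1}}$ at the generic level.

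Applying $\flat$ to the generic version of \eqref{eq:positive-inverse-KL-expansion} and comparing coefficients gives $m_{z^{-1}}^{x^{-1}}(q)=m_z^x(q)$, using $\eps(xz^{-1})=\eps(x^{-1}z)$. Specializing at $q=1$ and substituting into Step~1 yields \eqref{eq:D-standard-expansion-correct}.

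The main obstacle is convention bookkeeping rather than any conceptual difficulty. One must check that the specialization of the \cite{BKK2024} normalization of $m_z^x$ is compatible with the sign relation \eqref{eq:positive-negative-basis-relation} exactly as stated, so that the signs cancel in Step~1. One must also check that $\flat$ preserves that particular normalization. As a sanity check I would first verify the rank-one case against \eqref{eq:A1-D-basis}: there $m_1^1=m_1^s=m_s^s=1$ and $m_s^1=0$, which gives $\iota(D_1)=1+s$ and $\iota(D_s)=s$, as expected.
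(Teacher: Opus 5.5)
Your proposal is correct and follows the paper's proof: apply $\sigma$ to the positive inverse expansion to get $T_x=\sum_{z\leq x}m_z^x\negC_z$, pair with $D_y$ to obtain $\langle D_y,T_{x^{-1}}\rangle=m_{y^{-1}}^{x^{-1}}$, and finish with inversion symmetry. The only difference is that you justify that symmetry via the anti-involution $T_w\mapsto T_{w^{-1}}$ and uniqueness of the Kazhdan--Lusztig basis, whereas the paper simply cites it.
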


\begin{proof}
Apply $\sigma$ to
\eqref{eq:positive-inverse-KL-expansion}.  By
\eqref{eq:positive-negative-basis-relation},
\begin{align*}
  \eps(x)T_x
  &=
  \sum_{z\leq x}
  \eps(xz^{-1})m_z^x\,
  \eps(z)\negC_z\\
  &=
  \eps(x)
  \sum_{z\leq x}m_z^x\negC_z.
\end{align*}
Hence
\begin{equation}\label{eq:standard-in-negative-basis}
  T_x
  =
  \sum_{z\leq x}m_z^x\negC_z.
\end{equation}
It follows from \eqref{eq:D-dual-negative-basis} that
\[
  \langle D_y,T_{x^{-1}}\rangle
  =
  m_{y^{-1}}^{x^{-1}}.
\]
The inversion symmetry of inverse Kazhdan--Lusztig polynomials gives
\[
  m_{y^{-1}}^{x^{-1}}=m_y^x.
\]
Substitution into \eqref{eq:trace-identification} proves the claim.
\end{proof}

This lemma is the source of the completion.  Although $D_y$ is one
basis vector in the restricted dual, its standard-coordinate
expansion can have infinite support.

\subsection{Projection to a singular orbit}

Let $A$ be dominant integral, and suppose its stabilizer
$W_A\subset W$ is finite.  Let $X_A$ be the set of maximal-length
representatives of the right cosets $W/W_A$.  Define
\begin{equation}\label{eq:general-completed-orbit}
  \widehat{\cX}_A
  :=
  \prod_{x\in X_A}\Z e^{xA},
\end{equation}
with the ordinary $W$-action on exponents.

\begin{definition}\label{def:singular-projection}
For
$F=\sum_{z\in W}a_zT_z\in\widehat{\Z[W]}$, define
\begin{equation}\label{eq:Pi-A-definition}
  \Pi_A(F)
  :=
  \sum_{x\in X_A}
  \left(
    \sum_{u\in W_A}
    \eps(xu)a_{xu}
  \right)e^{xA}.
\end{equation}
Equivalently, \eqref{eq:Pi-A-definition} is the formal sum
\begin{equation}\label{eq:Pi-A-formal}
  \Pi_A(F)
  =
  \sum_{z\in W}\eps(z)a_ze^{zA},
\end{equation}
with terms having the same exponent combined.
\end{definition}

The inner sum in \eqref{eq:Pi-A-definition} is finite because $W_A$
is finite, so the formal expression in \eqref{eq:Pi-A-formal} is
well defined after equal exponents are combined.

\begin{lemma}\label{lem:Pi-equivariant}
The map
\[
  \Pi_A\colon
  \widehat{\Z[W]}
  \longrightarrow
  \widehat{\cX}_A
\]
is $W$-equivariant when the source carries
\eqref{eq:action-in-completion} and the target carries the ordinary
action on exponents.
\end{lemma}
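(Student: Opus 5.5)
We verify the identity directly, using the formal-sum description \eqref{eq:Pi-A-formal}. Fix $w\in W$ and $F=\sum_{z\in W}a_zT_z\in\widehat{\Z[W]}$. By \eqref{eq:action-in-completion},
\[
  w\cdot F=\eps(w)T_wF=\sum_{z\in W}\eps(w)a_z\,T_{wz},
\]
because left multiplication by the group element $T_w$ permutes the standard basis. Reindexing $z'=wz$, the coefficient of $T_{z'}$ in $w\cdot F$ is $\eps(w)a_{w^{-1}z'}$. This reindexing is a bijection of $W$, so it is compatible with the product completion: each coefficient of the result is a single coefficient of $F$.

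Next we apply \eqref{eq:Pi-A-formal}:
\[
  \Pi_A(w\cdot F)=\sum_{z'\in W}\eps(z')\eps(w)a_{w^{-1}z'}e^{z'A}
  =\sum_{z\in W}\eps(wz)\eps(w)a_z\,e^{wzA}.
\]
Since $\eps$ is a character, $\eps(wz)\eps(w)=\eps(z)$. The right-hand side is therefore $\sum_z\eps(z)a_z e^{w(zA)}$. This is exactly the ordinary action \eqref{eq:ordinary-orbit-action} of $w$ applied termwise to $\sum_z\eps(z)a_ze^{zA}=\Pi_A(F)$.

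The only real issue is to make the termwise manipulation with formal infinite sums rigorous. Equal exponents have to be combined the same way on both sides, and the ordinary action has to be well defined on the product $\widehat{\cX}_A$. We handle this with the coset bookkeeping of \eqref{eq:Pi-A-definition}.

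First, the ordinary action on $\widehat{\cX}_A$ is well defined. The map $\eta\mapsto w\eta$ permutes the orbit $WA$, and hence permutes the index set $X_A$ through the bijection $X_A\to WA$. So $w$ acts on the product by permuting coordinates.

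Second, for each orbit point $\eta=xA$ we compare coefficients. The coefficient of $e^{w\eta}$ in $w\cdot\Pi_A(F)$ is $\sum_{u\in W_A}\eps(xu)a_{xu}$. In $\Pi_A(w\cdot F)$, the terms with exponent $w\eta$ are indexed by the elements $z'$ with $z'A=w\eta$. These are exactly $z'=wxu$ with $u\in W_A$, a finite set because $W_A$ is finite. Their contributions are
\[
  \eps(wxu)\eps(w)a_{xu}=\eps(xu)a_{xu},
\]
so the two coefficients agree.

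Hence $\Pi_A(w\cdot F)=w\cdot\Pi_A(F)$ for every $w$. Since this holds for all elements of $W$, and both sides respect the group law, $\Pi_A$ is $W$-equivariant.
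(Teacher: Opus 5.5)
Your proof is correct and is essentially the paper's argument. Both use a coefficientwise computation, relying on the facts that left multiplication by $T_w$ permutes standard coordinates, that $\eps(w)\eps(wz)=\eps(z)$, and that each orbit point receives contributions only from one finite $W_A$-coset. Your explicit coset-by-coset coefficient comparison just spells out the paper's closing remark on well-definedness.
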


\begin{proof}
Let $F=\sum_{z\in W}a_zT_z$.  Since left multiplication by $T_w$
only permutes the standard coordinates, we may compute
coefficientwise:
\begin{align*}
  \Pi_A(w\cdot F)
  &=\sum_{z\in W}\eps(w)\eps(wz)a_z e^{wzA}\\
  &=\sum_{z\in W}\eps(z)a_z e^{wzA}\\
  &=w\cdot\Pi_A(F).
\end{align*}
For each orbit point, only the finitely many elements of one
$W_A$-coset contribute, so all three expressions are well defined.
\end{proof}

Let $w_A$ be the longest element of $W_A$.  The standard parabolic
Kazhdan--Lusztig criterion gives
\begin{equation}\label{eq:longest-left-order-criterion}
  X_A
  =
  \{y\in W\mid y\leq_L w_A\};
\end{equation}
see \cite{Deodhar1987,KT2002}.  For $x\in X_A$ and $y\in W$, set
\begin{equation}\label{eq:parabolic-inverse-coefficient}
  p_{x,y}^{A}
  :=
  \sum_{u\in W_A}\eps(u)m_y^{xu}.
\end{equation}

\begin{lemma}\label{lem:parabolic-cancellation}
For $x\in X_A$ and $y\notin X_A$, one has
\[
  p_{x,y}^{A}=0.
\]
For $x,y\in X_A$, the coefficient $p_{x,y}^{A}$ vanishes unless
$y\leq x$ in Bruhat order, and
\[
  p_{y,y}^{A}=1.
\]
\end{lemma}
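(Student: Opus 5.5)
The plan is to read $p^A_{x,y}$ as the coefficient of $\negC_y$ in an alternating sum of standard basis elements over a coset. Then the three claims follow from a left-ideal property and from Bruhat triangularity. For $x\in X_A$ put
\[
  E_x:=\sum_{u\in W_A}\eps(u)\,T_{xu}\in\Z[W].
\]
By \eqref{eq:standard-in-negative-basis}, $T_{xu}=\sum_{z\leq xu}m_z^{xu}\negC_z$. Hence
\[
  E_x=\sum_{z\in W}\Bigl(\sum_{u\in W_A}\eps(u)m_z^{xu}\Bigr)\negC_z=\sum_{z\in W}p^A_{x,z}\,\negC_z ,
\]
a finite sum. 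So $p^A_{x,y}$ is exactly the $\negC_y$-coordinate of $E_x$, and it suffices to understand $E_x$.

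For the vanishing statement, write $x=x'w_A$, where $x'$ is the minimal-length representative of the coset $xW_A$. Then $xu=x'(w_Au)$ with lengths adding, and $\eps(u)=\eps(w_A)\eps(w_Au)$. This gives
\[
  E_x=\eps(w_A)\,T_{x'}\sum_{u\in W_A}\eps(u)T_u .
\]
At $v=1$, the negative Kazhdan--Lusztig basis element of the longest element of a finite standard parabolic subgroup is, up to a global sign, the alternating sum $\sum_{u\in W_A}\eps(u)T_u$. In rank one this is $\negC_s=s-1$. Hence $E_x=\pm T_{x'}\negC_{w_A}$ lies in the left ideal $\Z[W]\,\negC_{w_A}$. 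By the defining property of the left preorder for the negative basis, this ideal is spanned by the $\negC_z$ with $z\leq_L w_A$. By \eqref{eq:longest-left-order-criterion} these are exactly the $z\in X_A$. Therefore $p^A_{x,y}=0$ for $y\notin X_A$.

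For triangularity and the diagonal, suppose $x,y\in X_A$. The coefficient $m_y^{xu}$ is nonzero only if $y\leq xu$. Since $x$ has maximal length in $xW_A$, we have $xu\leq x$ for all $u\in W_A$, so $p^A_{x,y}\neq0$ forces $y\leq x$. For $y=x$, a nonzero term needs $x\leq xu\leq x$, so $u=1$ and $p^A_{x,x}=m_x^x=1$.

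The step I expect to be the main obstacle is the identification of $\sum_u\eps(u)T_u$ with $\pm\negC_{w_A}$, together with the claim that left multiples of $\negC_{w_A}$ stay in the span of $\{\negC_z: z\leq_L w_A\}$. Both are standard, but they must be checked in the $v=1$ sign convention of \eqref{eq:positive-negative-basis-relation}. I would verify them by applying $\sigma$ to the corresponding positive-basis facts: $\posC_{w_A}=\sum_{u\in W_A}T_u$, and the positive-basis left ideal is spanned by the $\posC_z$ with $z\leq_L w_A$.
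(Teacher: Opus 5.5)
Your proposal is correct. The triangularity and diagonal parts are the same as the paper's argument; the two proofs differ only in how they obtain the vanishing for $y\notin X_A$.

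\textbf{The paper's route.} It never uses the left preorder in this lemma. It sets $a_A=\sum_{u\in W_A}\eps(u)T_u$ and characterizes $I_A=\Z[W]a_A$ as the elements transforming by the sign character under right multiplication by $W_A$. It then uses the descent multiplication rule $\posC_xT_s=\posC_x$ for $s\in W_A$ and $x\in X_A$. Transported through $\sigma$, this gives $\negC_xT_s=-\negC_x$, hence $\negC_x\in I_A$. A unitriangular comparison with the basis $b_x=\eps(w_A)T_{xw_A}a_A$ then yields $I_A=\bigoplus_{x\in X_A}\Z\negC_x$. Vanishing follows because $T_xa_A\in I_A$.

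\textbf{Your route.} You note that $a_A=\eps(w_A)\negC_{w_A}$, so $E_x=T_{x'}\negC_{w_A}$ lies in the left ideal generated by $\negC_{w_A}$. You then appeal to cell theory: the definition of $\leq_L$ specialized at $v=1$, together with the criterion \eqref{eq:longest-left-order-criterion}. Your plan to check this via $\sigma$ works, since $\sigma$ is an algebra automorphism of $\Z[W]$ and so carries left ideals to left ideals.

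\textbf{Comparison.} Your argument is shorter, but it imports more.
\begin{itemize}
\item You need \eqref{eq:longest-left-order-criterion}, though only the inclusion $\{z\mid z\leq_L w_A\}\subseteq X_A$. That inclusion is the monotonicity of right descent sets along $\leq_L$.
\item The definition of $\leq_L$ only gives that $\Z[W]\negC_{w_A}$ is \emph{contained in} the span of the $\negC_z$ with $z\leq_L w_A$, not that it is spanned by them. This is harmless, since containment is all you use.
\end{itemize}
The paper's argument is self-contained apart from the descent multiplication rule. It keeps the lemma independent of \eqref{eq:longest-left-order-criterion}, which the paper needs only afterwards, in \cref{prop:parabolic-kernel}, to rewrite the kernel. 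It also proves the stronger anti-spherical statement $I_A=\bigoplus_{x\in X_A}\Z\negC_x$ at $v=1$.
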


\begin{proof}
Set
\[
  a_A:=\sum_{u\in W_A}\eps(u)T_u,
  \qquad
  I_A:=\Z[W]a_A.
\]
The elements of $I_A$ are exactly the elements of the group algebra
that transform by the sign character under right multiplication by
$W_A$; this may be checked on each right coset separately.

For $x\in X_A$, let $r=xw_A$ be the minimal representative of the
same right coset and put
\[
  b_x:=\eps(w_A)T_r a_A.
\]
The supports of the $b_x$ lie in distinct right cosets, and
\begin{equation}\label{eq:antispherical-standard-basis}
  b_x
  =
  T_x+
  \sum_{\substack{u\in W_A\\u\neq w_A}}
  \eps(w_Au)T_{ru},
  \qquad
  ru<x.
\end{equation}
Thus the $b_x$, $x\in X_A$, form a basis of $I_A$ that is triangular
with respect to the standard basis.

Every simple reflection $s\in W_A$ is a right descent of every
$x\in X_A$.  The Kazhdan--Lusztig multiplication formula at $v=1$
gives
\[
  \posC_xT_s=\posC_x.
\]
Applying \eqref{eq:positive-negative-basis-relation} yields
\[
  \negC_xT_s=-\negC_x,
\]
so $\negC_x\in I_A$.  Since
\[
  \negC_x=T_x+\sum_{z<x}c_{z,x}T_z,
\]
comparison with \eqref{eq:antispherical-standard-basis} and induction
on length show that
\begin{equation}\label{eq:antispherical-ideal-basis}
  I_A
  =
  \bigoplus_{x\in X_A}\Z\negC_x.
\end{equation}
This is the $v=1$ anti-spherical canonical-basis statement.

For $x\in X_A$, equations
\eqref{eq:standard-in-negative-basis} and the group-algebra
multiplication give
\begin{align*}
  T_xa_A
  &=\sum_{u\in W_A}\eps(u)T_{xu}\\
  &=\sum_{y\in W}
    \left(
      \sum_{u\in W_A}\eps(u)m_y^{xu}
    \right)\negC_y\\
  &=\sum_{y\in W}p_{x,y}^{A}\negC_y.
\end{align*}
The left-hand side belongs to $I_A$.  By
\eqref{eq:antispherical-ideal-basis}, its coefficient at $\negC_y$
must vanish whenever $y\notin X_A$.  This proves the cancellation
statement.

Now let $x,y\in X_A$.  If $m_y^{xu}\neq0$, then $y\leq xu$ in
Bruhat order.  Since $x$ is maximal in its right $W_A$-coset, one
has $xu\leq x$, and hence $y\leq x$.  If $x=y$, the term $u=1$
contributes $m_y^y=1$, while $yu<y$ for every $u\neq1$, so
$m_y^{yu}=0$.  Therefore
$p_{y,y}^{A}=1$.
\end{proof}

\begin{proposition}\label{prop:parabolic-kernel}
The restriction of $\Pi_A\circ\iota$ to
$\cH_W^{\dual}(1)$ has kernel
\begin{equation}\label{eq:parabolic-kernel}
  \Span_\Z\{D_y\mid y\not\leq_L w_A\}.
\end{equation}
It therefore induces a $W$-equivariant injection
\begin{equation}\label{eq:Theta-A-ambient}
  \Theta_A\colon
  \cQ_{w_A}^{\dual}(1)
  \longhookrightarrow
  \widehat{\cX}_A.
\end{equation}
For $y\in X_A$, suppressing the bar on the image of $D_y$ in the
quotient, one has
\begin{equation}\label{eq:Theta-D-expansion}
  \Theta_A(D_y)
  =
  \sum_{x\in X_A}
  \eps(x)p_{x,y}^{A}e^{xA}.
\end{equation}
In particular, the coefficient of $e^{yA}$ is $\eps(y)$.
\end{proposition}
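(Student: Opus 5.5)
We compute $\Pi_A\circ\iota$ directly on the basis vectors $D_y$, using \cref{lem:D-expansion-in-standard-basis} and \cref{lem:parabolic-cancellation}.

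\textbf{Step 1: the image of a single $D_y$.} By \cref{lem:D-expansion-in-standard-basis}, $\iota(D_y)=\sum_{z}m_y^zT_z$. Applying \eqref{eq:Pi-A-definition} gives
\[
  \Pi_A\iota(D_y)
  =\sum_{x\in X_A}\Bigl(\sum_{u\in W_A}\eps(xu)m_y^{xu}\Bigr)e^{xA}
  =\sum_{x\in X_A}\eps(x)p^A_{x,y}e^{xA}.
\]
This is formula \eqref{eq:Theta-D-expansion}. By \cref{lem:parabolic-cancellation}, the expression vanishes when $y\notin X_A$. When $y\in X_A$, the coefficient of $e^{yA}$ is $\eps(y)p^A_{y,y}=\eps(y)$, and every other nonzero term has $x\in X_A$ with $y<x$.

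\textbf{Step 2: the kernel.} By \eqref{eq:longest-left-order-criterion}, $y\notin X_A$ is equivalent to $y\not\leq_L w_A$. So Step 1 shows that the span \eqref{eq:parabolic-kernel} lies in the kernel. For the reverse inclusion, take a finite combination $\sum_{y}a_yD_y$ that maps to zero. Discarding the $y\notin X_A$ terms, we may assume every $y$ with $a_y\neq0$ lies in $X_A$. Among these, pick one that is Bruhat-minimal and call it $y_0$. Consider the coefficient of $e^{y_0A}$ in the image. By Step 1, a term $D_y$ contributes to this coefficient only if $y\leq y_0$. Minimality of $y_0$ then leaves only $y=y_0$, and its contribution is $\eps(y_0)a_{y_0}\neq0$. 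This contradicts the combination mapping to zero, so the kernel is exactly \eqref{eq:parabolic-kernel}. Triangularity is the key point here: since $X_A\to WA$, $x\mapsto xA$, is a bijection, the exponents $e^{xA}$ for distinct $x\in X_A$ are distinct and no cancellation can occur between them.

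\textbf{Step 3: equivariance.} The action of $W$ on $\cH_W^{\dual}(1)$ is the restriction of \eqref{eq:group-level-dual-action}. Under $\iota$ this becomes \eqref{eq:action-in-completion}, and I would check that compatibility directly: $\langle w\cdot f,T_{x^{-1}}\rangle=\eps(w)\langle f,T_{x^{-1}w}\rangle$, and reindexing by $x\mapsto w^{-1}x$ gives $\iota(w\cdot f)=\eps(w)T_w\iota(f)$. Then \cref{lem:Pi-equivariant} shows $\Pi_A\circ\iota$ is $W$-equivariant. Because the kernel of an equivariant map is a submodule, \eqref{eq:parabolic-kernel} is stable, the quotient $\cQ^{\dual}_{w_A}(1)$ inherits the action, and the induced map $\Theta_A$ is a $W$-equivariant injection.

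\textbf{Expected obstacle.} The main point to verify is the triangularity in Step 2. Its direction is fixed by \cref{lem:parabolic-cancellation}, which gives $p^A_{x,y}\neq0\Rightarrow y\leq x$, and this is why I select a Bruhat-minimal $y_0$ rather than a maximal one. I also need the support finiteness of \cref{lem:Pi-equivariant}, so that each coefficient of the image is a finite sum. Both ingredients are already in place.
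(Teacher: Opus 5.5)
Your proposal is correct and follows essentially the same route as the paper. Like the paper, you compute $(\Pi_A\circ\iota)(D_y)$ from \cref{lem:D-expansion-in-standard-basis}, kill the $D_y$ with $y\notin X_A$ by \cref{lem:parabolic-cancellation}, and combine the unitriangularity of $p^A_{x,y}$ (your Bruhat-minimal-element argument is its explicit form) with \eqref{eq:longest-left-order-criterion} to identify the kernel. Equivariance then comes from \cref{lem:Pi-equivariant} in both. The only additions are that you check $\iota(w\cdot f)=\eps(w)T_w\iota(f)$ directly and deduce that the kernel span is $W$-stable from equivariance; the paper takes both as given.
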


\begin{proof}
By \cref{lem:D-expansion-in-standard-basis} and the definition of
$\Pi_A$,
\begin{equation}\label{eq:Pi-iota-D-expansion}
  (\Pi_A\circ\iota)(D_y)
  =
  \sum_{x\in X_A}
  \eps(x)p_{x,y}^{A}e^{xA}
  \qquad (y\in W).
\end{equation}
The first part of \cref{lem:parabolic-cancellation} shows that every
$D_y$ with $y\notin X_A$ lies in the kernel.  For $y\in X_A$, the
second part makes the matrix in
\eqref{eq:Pi-iota-D-expansion} unitriangular, with diagonal entry
$\eps(y)$, when $X_A$ is ordered by any linear extension of Bruhat
order.  Hence the images of the $D_y$, $y\in X_A$, are linearly
independent.  This proves that the kernel is exactly the span of the
remaining basis vectors.  Equation
\eqref{eq:longest-left-order-criterion} identifies that span with
\eqref{eq:parabolic-kernel}.  Equivariance follows from
\cref{lem:Pi-equivariant} and \eqref{eq:action-in-completion}.
\end{proof}

\subsection{Comparison with singular affine characters}

We now return to $W=\whW$ and
\[
  \lambda=-\Lambda_0,
  \qquad
  A=A_\ell=\lambda+\whrho,
  \qquad
  W_A=\langle s_0\rangle.
\]
For $x,y\in X_A$, abbreviate $p_{x,y}^{A}$ to $p_{x,y}$.  Then
\eqref{eq:Theta-D-expansion} reads
\begin{equation}\label{eq:Theta-D-parabolic}
  \Theta_A(D_y)
  =
  \sum_{x\in X_A}\eps(x)p_{x,y}e^{xA}.
\end{equation}

\begin{theorem}\label{thm:character-realization}
For every $y\in X_A$,
\begin{equation}\label{eq:character-realization-main}
  \Theta_A(D_y)
  =
  \eps(y)\,\whR\,\ch L(y\dotact\lambda).
\end{equation}
Thus the image of each dual canonical basis vector is the
corresponding signed normalized irreducible character.
\end{theorem}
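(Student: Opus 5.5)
The plan is to compare both sides of \eqref{eq:character-realization-main} coefficient by coefficient in $\widehat{\cX}_A$. The bridge between them is the singular affine Kazhdan--Lusztig character formula, in its inverse form as stated in \cite[Appendix~A]{BKK2024}. For $\lambda=-\Lambda_0$ we have $A=\lambda+\whrho$, which is dominant integral with finite stabilizer $\langle s_0\rangle$ by \eqref{eq:A-ell-stabilizer}. In this setting the Verma characters are given by
\[
  \whR\,\ch M(x\dotact\lambda)=e^{xA},\qquad x\in X_A.
\]
The inverse formula then expresses each irreducible character as an alternating combination of Verma characters, with coefficients given by inverse Kazhdan--Lusztig polynomials evaluated at $1$ and summed over the finite coset $xW_A$.

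The first step is to write that inverse formula in exactly the parabolic shape
\[
  \ch L(y\dotact\lambda)=\sum_{x\in X_A}\eps(xy^{-1})\Bigl(\sum_{u\in W_A}\eps(u)\,m_y^{xu}\Bigr)\ch M(x\dotact\lambda)
  =\sum_{x\in X_A}\eps(x)\eps(y)\,p^A_{x,y}\,\ch M(x\dotact\lambda).
\]
This is the standard singular reduction: one starts from the regular inverse formula at a regular deformation and uses $M(xu\dotact\lambda)=M(x\dotact\lambda)$ for $u\in W_A$. Equivalently, one translates to the wall, using that translation sends $L(xu\dotact\mu)$ to $L(x\dotact\lambda)$ when $x\in X_A$ and $u=1$, and to zero otherwise. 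The signs $\eps(xu)\eps(y)$ appearing in the regular formula are exactly what produce $\eps(x)\eps(y)\eps(u)$.

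The second step is to multiply by $\whR$, which gives
\[
  \whR\,\ch L(y\dotact\lambda)=\eps(y)\sum_{x\in X_A}\eps(x)p^A_{x,y}e^{xA}.
\]
Multiplying by $\eps(y)$ and comparing with \eqref{eq:Theta-D-parabolic} proves the claim. As a sanity check, the leading coefficient is $p_{y,y}=1$ by \cref{lem:parabolic-cancellation}, which agrees with the leading term used in \cref{prop:snch-injective}. Lower terms satisfy $x<y$, and $x\in X_A$ because of the cancellation part of the same lemma. So the support matches automatically.

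The main obstacle is the convention bookkeeping in the first step. The normalization of $m^x_z$ in \cite{BKK2024} must be matched with the maximal-length representatives $X_A$ used here; at a negative level the character formula involves antidominant rather than dominant orbits, and longest versus shortest coset representatives interchange. My first move would be to fix the conventions with the rank-one computation \eqref{eq:A1-projection}, which gives $\Theta(D_s)=-e^A$ and $\ch L=e^A/\whR$, and to check that the displayed formula returns $\eps(s)=-1$. I would then cite the singular Kazhdan--Lusztig theorem (Kashiwara--Tanisaki) at level $-1$: the level $-1+h^\vee>0$ falls in their positive-rational range for the negative-level Verma orbit of $-\Lambda_0$, and I would verify that their statement is precisely the displayed inverse formula with $m$ as in \eqref{eq:positive-inverse-KL-expansion}.
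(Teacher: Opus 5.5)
Your proposal is correct and is essentially the paper's argument: both rest on the cited singular inverse Kazhdan--Lusztig character formula, rewritten in right-coset form as $\whR\,\ch L(y\dotact\lambda)=\eps(y)\sum_{x\in X_A}\eps(x)p^A_{x,y}e^{xA}$ and compared with \eqref{eq:Theta-D-parabolic}; the ``convention bookkeeping'' you anticipate is done in the paper by starting from the left-coset statement of \cite{BKK2024} and applying $y=v^{-1}$, $x=w^{-1}$, the inversion symmetry $m_v^{uw}=m_{v^{-1}}^{w^{-1}u^{-1}}$ and $u\mapsto u^{-1}$ (rely on that citation rather than on translation onto the wall, which is not available in the usual form here, since regular integral dominant weights have shifted level at least $h^\vee=2\ell-2>2\ell-3$ and tensoring with integrable modules only raises the level). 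One slip in your sanity check: by \cref{lem:parabolic-cancellation} the off-diagonal terms have $x>y$ in Bruhat order (equivalently $xA<yA$), not $x<y$.
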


\begin{proof}
Bezrukavnikov--Kac--Krylov label the singular block by maximal
representatives $v$ of left $W_A$-cosets and write
\[
  L_v=L(v^{-1}\dotact\lambda),
  \qquad
  M_w=M(w^{-1}\dotact\lambda).
\]
We use only the formal-character consequence of their singular
inverse Kazhdan--Lusztig formula
\cite[Proposition~3.6 and Section~3.7]{BKK2024}.  After applying the
character map and multiplying by $\whR$, that formula is the
following equality in the completed orbit module:
\begin{equation}\label{eq:BKK-singular-character-left}
  \whR\,\ch L(v^{-1}\dotact\lambda)
  =
  \sum_{w}
  \eps(wv^{-1})
  \left(
    \sum_{u\in W_A}\eps(u)m_v^{uw}
  \right)e^{w^{-1}A},
\end{equation}
where $v$ and $w$ range over maximal left-coset representatives.
This is a completed character identity, not an assertion that the
displayed infinite sum is a finite relation in the ordinary
Grothendieck group.

Set
\[
  y=v^{-1},
  \qquad
  x=w^{-1}.
\]
Then $x,y\in X_A$.  Inversion symmetry gives
\[
  m_v^{uw}=m_{v^{-1}}^{w^{-1}u^{-1}},
\]
and the substitution $u\mapsto u^{-1}$ changes the inner sum in
\eqref{eq:BKK-singular-character-left} into $p_{x,y}$.  Moreover,
\[
  \eps(wv^{-1})
  =
  \eps(x^{-1}y)
  =
  \eps(x)\eps(y).
\]
Consequently,
\begin{equation}\label{eq:BKK-right-representative-form}
  \whR\,\ch L(y\dotact\lambda)
  =
  \eps(y)
  \sum_{x\in X_A}\eps(x)p_{x,y}e^{xA}.
\end{equation}
Since $\eps(y)^2=1$, comparison with
\eqref{eq:Theta-D-parabolic} proves
\eqref{eq:character-realization-main}.
\end{proof}

\section{Restriction to the subregular cell and completion of the proof}
\label{sec:subregular-restriction}

We now specialize the preceding construction to the shifted
level-$-1$ vacuum weight
\[
  A_\ell=-\Lambda_0+\whrho.
\]
By \eqref{eq:A-ell-stabilizer}, its stabilizer is generated by $s_0$.
Thus
\[
  W_{A_\ell}=\langle s_0\rangle,
  \qquad
  w_{A_\ell}=s_0,
  \qquad
  X_{A_\ell}=\{y\in\whW\mid y\leq_Ls_0\}.
\]
The character realization of \cref{thm:character-realization}
therefore gives an injective $\whW$-homomorphism
\begin{equation}\label{eq:type-D-Theta-ambient}
  \Theta_{A_\ell}\colon
  \cQ_{s_0}^{\dual}(1)
  \longhookrightarrow
  \widehat{\cX}_{A_\ell}
\end{equation}
satisfying
\begin{equation}\label{eq:type-D-Theta-simple}
  \Theta_{A_\ell}(D_y)
  =
  \eps(y)\,\whR\,
  \ch L\bigl(y\dotact(-\Lambda_0)\bigr)
  \qquad
  (y\leq_Ls_0).
\end{equation}

The canonical dual cell inclusion
\eqref{eq:group-level-cell-inclusion} identifies
\begin{equation}\label{eq:type-D-cell-inside-quotient}
  \cH_{\aff,\CellL(s_0)}^{\dual}(1)
  \quad\text{with}\quad
  \Span_\Z\{D_y\mid y\in\CellL(s_0)\}
  \subset \cQ_{s_0}^{\dual}(1).
\end{equation}
Jin's cell calculation gives
\begin{equation}\label{eq:type-D-cell-list-final}
  \CellL(s_0)=\{w_0,w_1,\ldots,w_\ell\}.
\end{equation}
Combining \eqref{eq:type-D-Theta-simple} and
\eqref{eq:type-D-cell-list-final}, we obtain
\begin{equation}\label{eq:Theta-cell-image}
  \Theta_{A_\ell}
  \bigl(\cH_{\aff,\CellL(s_0)}^{\dual}(1)\bigr)
  =
  \bigoplus_{i=0}^{\ell}
  \Z\,\eps(w_i)\whR\ch L_i.
\end{equation}
By \cref{def:signed-normalized-character,prop:snch-injective}, the
right-hand side of \eqref{eq:Theta-cell-image} is exactly the image of
$\snch$.

\begin{theorem}[Complete Grothendieck-group theorem]
\label{thm:corrected-Jin}
Assume the simple-object classification in
\cite[Theorem~6.2]{Jin2026}.  The image of
\[
  \snch\colon
  K_0(\cC_\ell)\longrightarrow
  \widehat{\cX}_{A_\ell}
\]
is $\whW$-stable.  There is therefore a unique $\whW$-action on
$K_0(\cC_\ell)$ for which $\snch$ is equivariant.  With this action,
the map
\begin{equation}\label{eq:corrected-Jin-isomorphism}
  \Phi_\ell\colon
  K_0(\cC_\ell)
  \xrightarrow{\ \sim\ }
  \cH_{\aff,\CellL(s_0)}^{\dual}(1),
  \qquad
  [L_i]\longmapsto D_{w_i},
\end{equation}
is a basis-preserving $\whW$-equivariant isomorphism.
\end{theorem}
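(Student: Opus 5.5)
The plan is to transport everything through the injective map $\Theta_{A_\ell}$ of \eqref{eq:type-D-Theta-ambient}, which is already $\whW$-equivariant, so that $\whW$-stability of $\operatorname{im}\snch$ comes from the cell side rather than being presupposed. First, by \cref{cor:K0-free-on-simples} the group $K_0(\cC_\ell)$ is free on $[L_0],\dots,[L_\ell]$. By \cref{prop:snch-injective}, $\snch$ maps this basis to the linearly independent family $\eps(w_i)\whR\ch L_i$. Hence $\operatorname{im}\snch=\bigoplus_i\Z\,\eps(w_i)\whR\ch L_i$.

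Second, I compare this image with the image of the cell module. Jin's cell list \eqref{eq:type-D-cell-list-final} gives $w_i\leq_L s_0$ for every $i$, so each $w_i\in X_{A_\ell}$ by \eqref{eq:longest-left-order-criterion}. Therefore \eqref{eq:type-D-Theta-simple} applies and gives $\Theta_{A_\ell}(D_{w_i})=\eps(w_i)\whR\ch L_i=\snch([L_i])$. This yields \eqref{eq:Theta-cell-image}, namely
\[
\operatorname{im}\snch=\Theta_{A_\ell}\bigl(\cH_{\aff,\CellL(s_0)}^{\dual}(1)\bigr).
\]
This equality is proved independently of any stability assertion, which avoids the circularity identified in \cref{sec:Jin-argument}.

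Third, I need $\cH_{\aff,\CellL(s_0)}^{\dual}(1)$, viewed via \eqref{eq:type-D-cell-inside-quotient} as $\Span_\Z\{D_y\mid y\in\CellL(s_0)\}\subset\cQ_{s_0}^{\dual}(1)$, to be a $\whW$-submodule. This is the step I expect to be the main obstacle, since it is where the cell filtration actually enters. I would argue from the left-cell preorder. In $\cQ_{s_0}^{\dual}(1)$ the $D_y$ with $y\not\leq_L s_0$ are already zero. The dual of the statement that $\negC_z T_w$ lies in the span of $\negC_{z'}$ with $z'\leq_L z$ (right multiplication, paired via $\delta_{y,z^{-1}}$) shows that $w\cdot D_y$ is a combination of $D_{y'}$ with $y\leq_L y'$. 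Every $y'$ surviving in the quotient satisfies $y'\leq_L s_0$, and every $y\in\CellL(s_0)$ satisfies $s_0\leq_L y$. Thus any surviving $y'$ with $y\leq_L y'$ lies in $\CellL(s_0)$. I would check the direction of the preorder under the inversion in \eqref{eq:D-dual-negative-basis} carefully. If it comes out backwards, I would instead invoke the canonical inclusion \eqref{eq:group-level-cell-inclusion}, which is stated to be a module map.

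Finally, since $\Theta_{A_\ell}$ is an equivariant injection, its image of a $\whW$-submodule is $\whW$-stable, so $\operatorname{im}\snch$ is $\whW$-stable. Because $\snch$ is injective, the action on $K_0(\cC_\ell)$ defined by $w\cdot\xi:=\snch^{-1}(w\cdot\snch(\xi))$ is well defined, is a group action, and is the unique action making $\snch$ equivariant. Define $\Phi_\ell:=(\Theta_{A_\ell}|_{\text{cell}})^{-1}\circ\snch$. It is a composite of an isomorphism onto the common image with the inverse of another such isomorphism, it is equivariant, and by the second step it sends $[L_i]\mapsto D_{w_i}$. The $[L_i]$ and the $D_{w_i}$ are bases of the two sides, so $\Phi_\ell$ is a basis-preserving $\whW$-equivariant isomorphism.
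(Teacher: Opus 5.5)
Your proposal is correct and follows essentially the same route as the paper: you use $\snch([L_i])=\Theta_{A_\ell}(D_{w_i})$ to identify $\operatorname{im}\snch$ with the $\Theta_{A_\ell}$-image of the dual cell span, deduce $\whW$-stability from the equivariance and injectivity of $\Theta_{A_\ell}$, transport the action, and set $\Phi_\ell=(\Theta_{A_\ell}|_{\mathrm{cell}})^{-1}\circ\snch$. Your explicit check that $\Span_\Z\{D_y\mid y\in\CellL(s_0)\}$ is a $\whW$-submodule of $\cQ_{s_0}^{\dual}(1)$ is correct in the paper's Kazhdan--Lusztig convention: the action sends $D_y$ only to combinations of $D_{y'}$ with $y\leq_L y'$, and in the quotient this forces $y'\sim_L s_0$. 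That check, like your verification that $w_i\in X_{A_\ell}$, only makes explicit what the paper takes from the canonical inclusion \eqref{eq:group-level-cell-inclusion}.
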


\begin{proof}
By \cref{cor:K0-free-on-simples}, the classes
$[L_0],\ldots,[L_\ell]$ form a basis of $K_0(\cC_\ell)$.  Equations
\eqref{eq:snch-simple} and \eqref{eq:type-D-Theta-simple} give
\[
  \snch([L_i])
  =
  \eps(w_i)\whR\ch L_i
  =
  \Theta_{A_\ell}(D_{w_i}).
\]
Thus \eqref{eq:Theta-cell-image} identifies the image of $\snch$ with
the image, under the equivariant injection $\Theta_{A_\ell}$, of the
dual left-cell module.  The image is therefore $\whW$-stable.  Since
$\snch$ is injective, the action on its image transports uniquely to
$K_0(\cC_\ell)$.  Applying $\Theta_{A_\ell}^{-1}$ on that image gives
\eqref{eq:corrected-Jin-isomorphism} and sends the displayed bases to
one another.
\end{proof}

\begin{corollary}\label{cor:SYZ-type-D-final}
For every $\ell\geq5$, the grading-restricted level-$-1$ vacuum block
of $L_{-1}(D_\ell)$ realizes the specialized dual affine left-cell
module attached to $\CellL(s_0)$.  In particular, the
cell-module realization predicted by Shan--Yan--Zhao and claimed by
Jin is valid for the grading-restricted vacuum block.
\end{corollary}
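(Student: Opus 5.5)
The corollary is essentially a restatement of \cref{thm:corrected-Jin}, so the plan is to assemble the inputs already established and check that together they give exactly the realization predicted by Shan--Yan--Zhao.

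\textbf{Inputs.} Fix $\ell\geq5$. Take as given:
\begin{itemize}
\item the simple-object classification \eqref{eq:graded-simple-classification}, which is inherited from Jin's Theorem~6.2 by the simplicity argument in \cref{sec:precise-block};
\item Jin's cell calculation \eqref{eq:type-D-cell-list-final};
\item the singular character formula of Bezrukavnikov--Kac--Krylov, used in \cref{thm:character-realization}.
\end{itemize}

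\textbf{Step 1: the ordinary Grothendieck group.} Invoke \cref{thm:C-ell-finite-length} to conclude that $\cC_\ell$ is a length category. The detecting functors are the exact, finite-dimensional weight-space functors $M\mapsto M_{\lambda_i}$. Then \cref{cor:K0-free-on-simples} shows that $K_0(\cC_\ell)$ is free with basis $[L_0],\ldots,[L_\ell]$. This is the d\'evissage step that was missing in Jin's argument.

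\textbf{Step 2: the ambient embedding.} Use \cref{prop:parabolic-kernel} with $A=A_\ell$. There $W_{A_\ell}=\langle s_0\rangle$, so $w_{A_\ell}=s_0$, and we obtain the $\whW$-equivariant injection $\Theta_{A_\ell}$ of $\cQ_{s_0}^{\dual}(1)$ into $\widehat{\cX}_{A_\ell}$.

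\textbf{Step 3: matching bases.} By \cref{thm:character-realization}, $\Theta_{A_\ell}(D_{w_i})=\snch([L_i])$ for each $i$. Combined with the cell list, the image of the dual cell module $\cH_{\aff,\CellL(s_0)}^{\dual}(1)$, sitting inside $\cQ_{s_0}^{\dual}(1)$ via \eqref{eq:group-level-cell-inclusion}, coincides with $\snch(K_0(\cC_\ell))$.

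\textbf{Step 4: transporting the action.} Since $\snch$ is injective (\cref{prop:snch-injective}), \cref{thm:corrected-Jin} gives the basis-preserving $\whW$-equivariant isomorphism $\Phi_\ell$. This is precisely the statement that the vacuum block realizes the specialized dual left-cell module attached to $\CellL(s_0)$.

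\textbf{Main obstacle: matching conventions.} The only substantive point is confirming that the object called the ``specialized dual affine left-cell module'' in \cite[Conjecture~5.2.1]{SYZ2026} agrees with $\cH_{\aff,\CellL(s_0)}^{\dual}(1)$ as defined in \cref{sec:group-level-dual}. I would proceed as follows:
\begin{itemize}
\item Check that the $*$-twisted action \eqref{eq:group-level-dual-action} is the $v=1$ specialization of \cite[(2.1.1)]{SYZ2026}.
\item Check that the cell inclusion \eqref{eq:group-level-cell-inclusion} specializes \cite[(2.1.2)]{SYZ2026}, keeping in mind the sign twist of \cref{rem:twisted-dual-not-ordinary-contragredient}.
\item Record explicitly that the realization is asserted for the choice of square root $v\mapsto1$, and that the parity factor $\eps(w_i)$ is absorbed into $\snch$ rather than into the module structure.
\end{itemize}

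Finally, note the scope of the conclusion: it holds for the grading-restricted block $\cC_\ell$ only, not for a possibly larger ungraded category.
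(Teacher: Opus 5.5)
Your proposal is correct and follows the paper's route. The paper deduces the corollary directly from \cref{thm:corrected-Jin} together with Jin's classification and cell calculation, and your Steps~1--4 unpack the proof of that theorem: finite length via weight-space detectors, the equivariant injection $\Theta_{A_\ell}$, the character realization, and injectivity of $\snch$. The convention checks you single out are the identifications with the Shan--Yan--Zhao dual action and cell inclusion that the paper already records in \cref{sec:group-level-dual}, so they agree with the paper's argument rather than add to it.
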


\begin{proof}
Combine \cref{thm:corrected-Jin} with
\cite[Theorem~6.2 and Proposition~2.1]{Jin2026}.
\end{proof}

\appendix

\section{Signs, inverses, and coset representatives}
\label{app:conventions}

This appendix records the convention changes used in
\cref{sec:Hecke-character}.  They are included because each of them
affects the final basis statement.

\subsection{Positive and negative Kazhdan--Lusztig bases}

Let $\posC_w$ denote the positive canonical basis used in
\cite{BKK2024} after specialization, and let $\negC_w$ denote the
negative basis used in \cite{SYZ2026}.  With
\[
  \sigma(T_w)=\eps(w)T_w,
\]
the relation is
\[
  \negC_w=\eps(w)\sigma(\posC_w).
\]
Consequently, if
\[
  T_x
  =
  \sum_{z\leq x}
  \eps(xz^{-1})m_z^x\posC_z,
\]
then
\[
  T_x
  =
  \sum_{z\leq x}m_z^x\negC_z.
\]
This cancellation is the reason that $D_y$ has the unsigned
standard-coordinate expansion in
\eqref{eq:D-standard-expansion-correct}.

\subsection{Left and right cosets}

Bezrukavnikov--Kac--Krylov use maximal representatives $v$ of left
cosets
\[
  W_A\backslash W
\]
and label the corresponding simple module by
$L(v^{-1}\dotact\lambda)$.  Jin and Shan--Yan--Zhao use the
right-coset parameter
\[
  y=v^{-1}\in W/W_A
\]
and label the module by $L(y\dotact\lambda)$.  Inversion changes
maximal left-coset representatives into maximal right-coset
representatives.  It also changes the coefficient
\[
  m_v^{uw}
\]
into
\[
  m_{v^{-1}}^{w^{-1}u^{-1}},
\]
which is the coefficient used in
\eqref{eq:parabolic-inverse-coefficient}.

\subsection{The \texorpdfstring{$*$}{star}-twisted dual action}

The dual action in \cite{SYZ2026} is defined using the involution
$v\mapsto-v$.  After choosing $v=1$, a simple reflection acts by
\[
  \langle s\cdot f,h\rangle
  =
  -\langle f,hT_s\rangle.
\]
Under the trace identification this becomes
\[
  s\cdot F=-T_sF.
\]
The singular projection includes the compensating factor $\eps(z)$:
\[
  T_z\longmapsto\eps(z)e^{zA}.
\]
It follows that the target carries the ordinary action on exponents,
while the basis vector $D_y$ maps to the signed normalized character
\[
  \eps(y)\whR\ch L(y\dotact\lambda).
\]
Omitting this sign changes the rank-one module.

\subsection{The order convention}

For a finite parabolic subgroup $W_A$ with longest element $w_A$,
the maximal right-coset representatives satisfy
\[
  X_A=\{y\in W\mid y\leq_Lw_A\}
\]
in the left-preorder convention used in \cite{SYZ2026}.  The
singular-orbit realization therefore factors through the quotient by
the span of the $D_y$ with $y\not\leq_Lw_A$.  In the present
application $W_A=\langle s_0\rangle$ and $w_A=s_0$.

\section{A useful specialization of the parabolic coefficient}
\label{app:rank-one-parabolic}

In the type-$D$ application, $W_A=\{1,s_0\}$.  Hence
\eqref{eq:parabolic-inverse-coefficient} reduces to
\begin{equation}\label{eq:type-D-parabolic-coefficient-explicit}
  p_{x,y}
  =
  m_y^x-m_y^{xs_0}.
\end{equation}
The character realization is therefore
\begin{equation}\label{eq:type-D-character-realization-explicit}
  \eps(y)\whR\ch L\bigl(y\dotact(-\Lambda_0)\bigr)
  =
  \sum_{x\in X_{A_\ell}}
  \eps(x)
  \bigl(m_y^x-m_y^{xs_0}\bigr)e^{xA_\ell}.
\end{equation}
Formula \eqref{eq:type-D-character-realization-explicit} is the
right-coset, negative-basis form of the singular inverse
Kazhdan--Lusztig character formula.

\end{document}